\documentclass[12p]{article}
\usepackage{amsfonts}
\usepackage{a4wide}
\usepackage{amsmath,amscd,amsthm,a4,amssymb}
\usepackage{sectsty}
\usepackage{graphicx}
\graphicspath{{./images/}}
\usepackage{lipsum}
\usepackage[verbose=true,letterpaper]{geometry}
\usepackage{mathtools}
\setcounter{MaxMatrixCols}{30}
\numberwithin{equation}{section}
\usepackage{tikz}
\usetikzlibrary{3d, calc}

\usepackage{multicol}

\title{\textbf{Best approximation of a three-variable function by sum of one-variable coordinate functions}}
\author{Rashid A.Aliev$^{1}$, Vugar A.Guliyev$^{2}$, Amil F.Jabiyev$^{3}$}

\newtheorem{theorem}{Theorem}[section]
\newtheorem{definition}{Definition}[section]
\newtheorem{lemma}{Lemma}[section]

\newtheorem{remark}{Remark}[section]

\begin{document}
\maketitle
\begin{center}

$^{1,2,3}$ {Baku State University, Baku, Azerbaijan}\\
\vspace{1mm}

$^{1}${Institute of Mathematics and Mechanics, Baku, Azerbaijan}\\
\vspace{1mm}

e-mail: $^{1}$aliyevrashid@mail.ru, $^{2}$ quliyevvuqar.213@gmail.com, $^{3}$ amilcbiyev23@gmail.com
\end{center}
\begin{abstract}
    Consider the following approximation problem of a continuous function of three variables by the sum of three continuous functions of one variable:
    \[ E(f,\Omega)=\inf||f(x,y,z)-\phi(x)-\psi(y)-\omega(z)||_{\infty}=? \]
    where $f(x,y,z)$ is a given continuous function defined on $\Omega=[0,1]^3$ and the infimum runs over all triplets of continuous functions $\phi(x),\psi(y),\omega(z)$ defined on the unit interval $[0,1]$. In this paper, we will prove a formula to calculate the error $E(f,\Omega)$ under certain conditions.
\end{abstract}

\begin{quote}\small
{\it Keywords:} Best approximation;  coordinate functions; projection cycle; Golomb’s formula; minimal projection cycle.
\end{quote}

\begin{quote}\small
2020 \textit{ Mathematics Subject Classification: }  41A50; 41A30
\end{quote}

\smallskip

\section{Introduction}

\subsection{Problem statement in a general form and previous results}

Let \(f:X \to \mathbb{R}\) be a continuous function of \(n\) variables, where \(X=X_1\times X_2 \times\dots X_n\) is the Cartesian product of \(n\) compact Hausdorff spaces. Consider the approximation of \(f\) by sums of \(g_i, i=1,\dots,n\), where \(g_i\) is a continuous function on \(X_i\) for \(i=1,\dots,n\). In other words, the problem asks for 
\begin{equation} \label{e: main-problem}
    E(f,X)=\inf_{(g_1,g_2,...,g_n)}||f(x_1,x_2,\dots,x_n)-\sum_{i=1}^ng_i(x_i)||_{C(X)}=?
\end{equation}
where the infimum runs over all \(n\)-tuples of functions \((g_1,g_2,...,g_n)\) satisfying \(g_i \in C(X_i),i=1,\dots,n\).

In general, this problem is related to Hilbert's 13th problem. Using A.S.Kronrod's \cite{Kronrod} results, A.N. Kolmogorov \cite{Kolmogorov_1956} showed that any function $f \in C([0,1]^{n})$ is representable as a superposition of continuous functions of three variables. Developing this method, V.I.Arnold \cite{Arnold_1957}, \cite{Arnold_1959} proved that each continuous function of three variables can be represented by a superposition of continuous functions of two variables. This is a negative answer to Hilbert's conjecture. Further, A.N.Kolmogorov \cite{Kolmogorov_1957} proved the following remarkable, stronger theorem in a simple, elementary and elegant way:

	\textbf{Kolmogorov’s Theorem.} There exist increasing functions $\phi_{pq} \in C([0,1])$, $p=1,...,n$, $q=1,...,2n+1$, such that an arbitrary given function
\[f(x_1,...,x_n) \in C([0,1]^n) \]
can be represented in the form
\[f(x_1,...,x_n)=\sum_{q=1}^{2n+1}g_q \left(\sum_{p=1}^n \phi_{pq}(x_p)  \right),  \]
where $g_q \in C([0,1])$ depend on $f$.

Thus, every continuous function on the $n$ variables can be represented by a superposition of continuous functions of one variable and the simplest function of two variables, namely, the sum of variables.

In addition to the problems of finding a precise expression for a given function in terms of a combination of functions of fewer variables, it is natural to consider the best approximation of functions of several variables by combinations of functions of fewer variables. These problems arise in various areas of mathematics, for example, in solving integral equations (see \cite{Collatz}), functional equations (see \cite{Buck}), in saving information (see \cite{Diliberto_Straus}), in scaling matrices (see \cite{Golitschek_1980}, \cite{Golitschek_1982}), in isogeometric analysis (see \cite{Bosy}), etc. 

In the space $C([0,1]^n)$, this problem was first studied by S. P. Diliberto and E. G. Straus \cite{Diliberto_Straus} (the problem was proposed by RAND Corporation for the most efficient distribution of data in the small memory of the computers of the day). The same problem was investigated by Yu. P. Ofman \cite{Ofman} for bounded bivariant functions, and for more general sets by D. E. Marshall and A. G. O'Farrell in \cite{Marshall_Farrel}. In \cite{Diliberto_Straus}, Diliberto and Straus gave a “leveling algorithm" to find the approximation error (1.1) for \(n=2\). 
The addressed problem, together with the methods developed from its analysis, finds important applications in approximation by subalgebras and, in particular, in RBF approximation, ridge function approximation, and neural network models. Recent papers (see \cite{Akhmedov, Asgarova_2025, Asgarova_Huseynli, Asgarova_Maharov, Aysu}) improve the two-dimensional theory in connection with these directions. Moreover, based on the “levelling" algorithm, the ADI (alternating-direction-implicit) iteration method for numerical solution of differential equations is developed (see \cite{Tang, Wachspress_1994,Wachspress_2013}).

In the same paper, Diliberto and Straus stated that a similar algorithm will work for \(n \ge 3\). However, G. Aumann in his work \cite{Aumann} has shown that while the proof in \cite{Diliberto_Straus} was complete for case \(n=2\), the generalization of the algorithm suggested for \(n \ge 3\) was incorrect and proved a necessary and sufficient condition on the class of approximating functions that the “leveling algorithm" works. A direct counterexample refuting the “leveling algorithm" in \(n \ge 3\) was given by V.A. Medvedev \cite{Medvedev}. This example shows that the levelling algorithm, nor its small variations, can converge to a best approximation from the sum of three subspaces (see \cite{Pinkus}).

Another solution approach for \(n \ge 3\) to the problem starts with the paper \cite{Golomb} by M. Golomb, where an important duality formula involving functionals defined on projection-cycles for the problem \eqref{e: main-problem} was stated. However, later in \cite{Marshall_Farrel} by D.E. Marshall and A.G. O'Farrell, it was shown that the proof of the formula given in \cite{Golomb} also has a gap. In \cite{Marshall_Farrel}, using the ergodic method, the lightning-bolt principle is also proven, which corresponds to the two-dimensional version of Golomb's formula. It is worth remarking that the results obtained in the same paper in the two-dimensional case do not extend to sums such as $f (x)+g(y)+h(z)$ on subsets of \(\mathbb{R}^3\). There is no way to generate the annihilators of a sum of three algebras using any analogue of the lightning bolts. This is related to the fact that ergodic theory is essentially limited to actions of groups that have subexponential growth.  This shows that there are essentially new phenomena that arise with the sum of three subalgebras of \(C(X)\), as opposed to two (see \cite{O'Farrel}).

Future, with the help of the results of S.Ya. Khavinson \cite{Khavinson} and K.G. Navada \cite{Navada}, V. Ismailov \cite{Ismailov_2009} proved that the formula given by M. Golomb is valid in a stronger form. Although this formula itself involves finding a supremum over an infinite number of functionals and there is no closed formula to find the error explicitly, even with no additional conditions.

The first finite formula for the two-dimensional case is established by T.J.Rivlin and R.J.Sibner in the paper \cite{Rivlin_Sibner}, under the condition \(f_{xy} \ge 0\), 
\[  E(f,[0,1]^2)=\frac{1}{4}[f(1,1)-f(1,0)-f(0,1)+f(0,0)]. \]
L. Flatto gave a different proof of the last formula in his work \cite{Flatto}, where a general result for the approximation of an \(n\) variable function by \(n-1\) variable functions was suggested under similar conditions.
Some other notable improvements in this direction have been made in \cite{Muhammad} \cite{Ismailov_2021}, where generalisations of the above formula in stair-like polygons have been proven. However, no finite formula has been proven for other domains, even for the simplest ones as the circle and triangle.

In case $n \ge 3$ (even with additional conditions), there is no finite formula and converging algorithm to calculate the error $E(f, X)$.
In the present paper, under conditions $\Delta_x\Delta_y f\ge 0$, $\Delta_x\Delta_z f\ge 0$, $\Delta_y\Delta_z f\ge 0$, we will prove a finite formula to calculate error (1.1) in  $[0,1]^3$ (See Theorem \ref{t:Theorem 1}). 

The obtained results and methods proposed in this paper can be used in or extended to approximation by subalgebras and, in particular, in RBF approximation, ridge function approximation, and neural network models in the three-dimensional case. Moreover, the same ideas could lead to the construction of a three-dimensional levelling-type algorithm to find the best approximation functions, too. Such an algorithm would allow the construction of more efficient versions of existing numerical solution methods. As an example, the three-dimensional ADI iteration method itself proposed in \cite{Wachspress_1994} uses the two-dimensional levelling algorithm due to the lack of a converging three-dimensional alternative.

\subsection{Basic definitions and formulas}
\bigskip
We start with some definitions that will be used throughout this paper. Most of the definitions in this subsection are taken from the paper \cite{Ismailov_2009} (also can be found in the monograph \cite{Ismailov_2021}).
\begin{definition}
    A set of points \(p=(x_1,x_2,...,x_m), \ x_i \in X\) where the set \(X\) is the Cartesian product \(X=X_1\times X_2 \times\dots X_n\), is called a \textbf{projection cycle} if there exist some nonzero real numbers \(\lambda_1,\lambda_2,..\lambda_m\) such that
    \begin{equation}\label{e:projection}
        \sum_{\pi_j(x_i)=\xi}\lambda_i=0, \ \forall  \xi \in X_j \ \text{ and } \ \forall j, \  0\le j\le n,
    \end{equation}
    where \(\pi_j(x)\) is the projection of \(x\) into \(X_j\) and \(\xi\) is an arbitrary element of the set \(X_j\). 
\end{definition}
The definition of a projection cycle goes back to M.Golomb \cite{Golomb}. The above-given definition is an equivalent form of the Golomb's definition taken from \cite{Ismailov_2009}. 
\begin{definition}
    A projection cycle \(p=(x_1,x_2,...,x_m)\) together with a vector \(\lambda=(\lambda_1,\lambda_2,..\lambda_m)\) satisfying the equation \eqref{e:projection} is called a \textbf{projection cycle-vector} and denoted by \((p,\lambda)\). The number of points of \(p\) is called the length of the projection-cycle vector \((p,\lambda)\). And \(\lambda_i=\lambda(x_i)\) is called the weight at point \(x_i\).
\end{definition}
\begin{remark}
    For \(n=3\), the left-hand side of equation \eqref{e:projection} becomes the sum over planes parallel to one of the coordinate planes. Thus, if a projection-cycle has a point on a plane parallel to one of the coordinate planes, then it has at least one other point with an opposite signed weight on the same plane.
\end{remark}
\begin{definition}
    A projection cycle is called a \textbf{minimal projection cycle} if it has no proper subset that is a projection cycle. A projection cycle-vector \((p,\lambda)\) is called a \textbf{minimal projection cycle-vector} if the set of points of \(p\) corresponding to nonzero components of \(\lambda\) is a minimal projection cycle.
\end{definition}
\begin{definition}
    A projection cycle-vector \((p,\lambda)\) is called orthonormal if
\[     \sum|\lambda_i|=1.    \]
\end{definition}

The concept of the minimal projection cycle was defined by name \textit{loop} in \cite{Navada} by K.G. Navada.

The following theorem expresses Golomb's formula (proposed in \cite{Golomb}), in a revisited and improved form, from \cite{Ismailov_2009} (See: Theorem 2.8 in \cite{Ismailov_2009} or Theorem 3.37 in \cite{Ismailov_2021}).
\begin{theorem}[Golomb's Formula] The error of approximation in \eqref{e: main-problem} can be given as 
    \begin{equation} \label{e:Golomb}
   E(f,X)=\sup_{(p,\lambda)} \frac{\sum_{}\lambda_if(x_i)}{\sum|\lambda_i|}. 
\end{equation}
where the supremum runs over all minimal projection cycles-vectors on the set \(X\).
\end{theorem}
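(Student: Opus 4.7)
The plan is to recast the infimum \eqref{e: main-problem} as a distance in $C(X)$ and apply the standard duality between best approximation and annihilators. Let $Y \subset C(X)$ be the closed subspace
\[
Y=\{g_1\circ\pi_1+g_2\circ\pi_2+\cdots+g_n\circ\pi_n : g_i\in C(X_i)\},
\]
so that $E(f,X)=\operatorname{dist}_{C(X)}(f,Y)$. By a standard Hahn--Banach / Riesz representation argument, this distance equals
\[
\sup\Bigl\{\Bigl|\int_X f\,d\mu\Bigr| : \mu\in M(X),\ \|\mu\|\le 1,\ \mu\in Y^{\perp}\Bigr\},
\]
and one checks directly that $\mu\in Y^{\perp}$ is equivalent to saying that every marginal $(\pi_j)_*\mu$ vanishes, since $\int g\circ\pi_j\,d\mu=\int g\,d(\pi_j)_*\mu$ must be $0$ for all $g\in C(X_j)$.

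With this characterization in hand I would first dispatch the easy inequality $E(f,X)\ge\sum\lambda_if(x_i)/\sum|\lambda_i|$. Given a minimal projection cycle-vector $(p,\lambda)$, form the discrete signed measure $\mu=\sum_i\lambda_i\delta_{x_i}$. Its total variation is exactly $\sum|\lambda_i|$, and condition \eqref{e:projection} says precisely that each marginal $(\pi_j)_*\mu$ vanishes on every point of $X_j$, so $\mu\in Y^{\perp}$. Plugging $\mu/\|\mu\|$ into the duality formula yields the required lower bound on $E(f,X)$, uniformly over all projection cycle-vectors.

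The serious work is the reverse inequality. The plan is to show that the supremum of $|\int f\,d\mu|$ over the unit ball of $Y^{\perp}\cap M(X)$ is attained (or at least arbitrarily approached) by \emph{discrete} measures supported on projection cycles. I would do this in two stages. First, a finite-dimensional reduction: approximate any extremal $\mu\in Y^{\perp}$ with $\|\mu\|\le1$ by a finitely supported measure $\mu_\varepsilon$ whose every marginal is still zero; this can be done by partitioning $X$ into a fine mesh of product cells, replacing $\mu$ with its cell-masses concentrated at chosen nodes, and then killing the (small) spurious marginal by adding a correction which is itself a discrete measure with zero marginals (the correction can be built cell-by-cell using products of one-variable zero-mean measures). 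By continuity of $f$ and control of total variation, $\int f\,d\mu_\varepsilon\to\int f\,d\mu$ and $\|\mu_\varepsilon\|\to\|\mu\|$. Second, reduction to minimal projection cycles: any discrete signed measure in $Y^{\perp}$ decomposes as a positive linear combination of minimal projection cycle-vector measures (by iteratively extracting minimal sub-cycles, a finite combinatorial procedure that terminates because the support is finite). The ratio $\sum\lambda_if(x_i)/\sum|\lambda_i|$ is then a convex combination of the corresponding ratios of the minimal pieces, so the largest of those minimal ratios is at least as big. Combining the two stages proves $E(f,X)\le\sup_{(p,\lambda)\ \mathrm{minimal}}\sum\lambda_if(x_i)/\sum|\lambda_i|$.

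The main obstacle is the discretization step of the hard direction: it is not hard to approximate $\mu$ weakly by discrete measures, but ensuring that each such approximation lies \emph{exactly} in $Y^{\perp}$ (so that it corresponds to an honest projection cycle-vector) requires a careful marginal-correction construction, since even an arbitrarily small non-vanishing marginal would invalidate the use of \eqref{e:projection}. Once that technical point is handled, the combinatorial reduction from projection cycles to minimal ones is elementary, and the two inequalities together give the claimed formula \eqref{e:Golomb}.
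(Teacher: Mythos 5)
You should first note that the paper itself contains no proof of this theorem: it is quoted from \cite{Ismailov_2009} (Theorem 2.8; see also Theorem 3.37 of \cite{Ismailov_2021}), where it is established by the same duality route you outline, resting on Khavinson's duality \cite{Khavinson} and on Navada's cycle results \cite{Navada}. Your first two steps are sound: $E(f,X)=\operatorname{dist}(f,Y)$, Hahn--Banach plus Riesz representation identify the dual problem with measures all of whose marginals vanish (closedness of $Y$ is neither needed for this nor obvious, so drop that claim), and each minimal projection cycle-vector gives the lower bound through $\mu=\sum_i\lambda_i\delta_{x_i}$.

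The genuine gap is in the hard direction, at exactly the point where Golomb's original argument was found deficient (\cite{Aumann}, \cite{Marshall_Farrel}). First, your marginal repair is impossible as stated: the maps $\mu\mapsto(\pi_j)_*\mu$ are linear, so adding a correction ``which is itself a discrete measure with zero marginals'' leaves the spurious marginals of the lumped measure exactly as they were; it can cancel nothing. The step can be saved, but by a different device: take the cells to be products $A_1\times\cdots\times A_n$ of finitely many Borel cells of the factors (obtained from a finite cover of $X$ by open boxes on which $f$ oscillates by less than $\varepsilon$, so no metric is needed), and place the mass $\mu(A_1\times\cdots\times A_n)$ at a node whose $j$-th coordinate depends only on $A_j$. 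Then the $X_j$-marginal of the lumped measure charges each node with $\mu(\pi_j^{-1}(A_j))=\bigl((\pi_j)_*\mu\bigr)(A_j)=0$, so it lies in $Y^{\perp}$ exactly, with $\|\mu_\varepsilon\|\le\|\mu\|$ and $\bigl|\int f\,d\mu_\varepsilon-\int f\,d\mu\bigr|\le\varepsilon\|\mu\|$; no correction is required at all. Second, your passage to minimal cycles silently requires a sign-consistent (cancellation-free) decomposition: the ratio is a convex combination of the ratios of the pieces only if $\sum_i|\lambda_i|$ equals the sum of the total variations of the pieces, and an arbitrary ``iterative extraction of minimal sub-cycles'' does not guarantee this. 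That one can always find a minimal sub-cycle whose weights have the same signs and whose ratio is at least as large is precisely the content of Navada's theorem, quoted in this paper as Lemma 2.4 (alternatively, argue via extreme points of the polytope of sign-constrained zero-marginal vectors supported on the given finite support). With these two repairs your outline matches the proof in the cited literature; as written, the key discretization step fails and the minimal-cycle reduction is unsupported.
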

\begin{remark}
    For simplicity, we use the term ‘‘projection cycle-vector’’ instead of ‘‘projection cycle, vector pair’’ used in \cite{Ismailov_2009} and \cite{Ismailov_2021}.
\end{remark}
In what follows, the following notations will be used.
As usual \(\Delta_{(x,y,z)} f\) is defined as
\[
\Delta_{(x,y,z)} f(u,v,w)= f(u+x,v+y,w+z)-f(u,v,w).
\]
For simplicity, we denote
\[
\Delta_x=\Delta_{(x,0,0)}, \Delta_y=\Delta_{(0,y,0)},\Delta_z=\Delta_{(0,0,z)}.
\]
For any finite sets \(S\), the set \([S]\) denotes the set of all projection cycle-vectors, which, each point belongs to the set \(S\) and \([S]_{min}\) denote the set of all minimal projection cycle-vectors \((p,\lambda) \in [S]\).

\section{Three variable case}
\bigskip

Let \(\Omega=[0,1]^3\) denote the unit cube, and let \(f:\Omega\rightarrow \mathbb{R}\) be a given continuous function of three variables. Consider the problem:
\[
    E(f,\Omega)=\inf||f(x,y,z)-\phi(x)-\psi(y)-\omega(z)||_{C(X)}=?
\]
where the infimum runs over all triplets of continuous functions \(\phi(x),\psi(y),\omega(z)\) defined on the unit interval \([0,1]\).
\\
Assume \(f\) satisfies the following conditions at any point of \(\Omega\)
\begin{equation} \label{e: delta class definition}
    \begin{cases}
        \Delta_x\Delta_y f\ge 0\\
        \Delta_x\Delta_z f\ge 0\\
        \Delta_y\Delta_z f\ge 0.
    \end{cases}
\end{equation}
\\
If \(f\) has mixed derivatives with respect to any pair of variables at any point of \(\Omega\), then the inequalities \eqref{e: delta class definition} are equivalent to
\begin{equation} \label{e:definition of the class}
    \begin{cases}
        f_{xy}\ge 0\\
        f_{yz}\ge 0\\
        f_{xz}\ge 0.
    \end{cases}
\end{equation}
\\
We start by applying Golomb's Formula \eqref{e:Golomb} and get:
\begin{equation} \label{e:Golomb-applied}
   E(f,\Omega)=\sup_{(p,\lambda)} \frac{\sum_{}\lambda_if(x_i)}{\sum|\lambda_i|}. 
\end{equation}
where the supremum runs over all minimal projection cycle-vectors \((p,\lambda)\) on the set \(\Omega\). To proceed further, we will search the supremum \eqref{e:Golomb-applied} as
\begin{equation} \label{e: supremum-maximum}
   E(f,\Omega)=\sup_{(p,\lambda)} \frac{\sum_{}\lambda_if(x_i)}{\sum|\lambda_i|}=\sup_{A,B,C}  \left({\max_{(p,\lambda)\in [A\times B\times C]} \frac{\sum_{}\lambda_if(x_i)}{\sum|\lambda_i|}}\right),
\end{equation}
where the last supremum runs over all triples of finite sets \(A,B,C \subset [0,1]\).
Obviously, for any  projection cycle-vector \((p,\lambda)\), there exists at least one triple of sets \(A,B,C \subset [0,1]\) such that \((p,\lambda) \in [A\times B\times C]\). On the other hand, choosing the sets \(A,B,C\) finite ensures that the above maximum over \([A\times B\times C]\) is attained. Hence, the relation \eqref{e: supremum-maximum} is well-defined. 

Before proving our main results, we have to prove some facts. From now on will proceed with the following stricter version of the inequalities \eqref{e: delta class definition}
\begin{equation} \label{e:strict-class-definition}
    \begin{cases}
        \Delta_x\Delta_y f> 0\\
        \Delta_x\Delta_z f> 0\\
        \Delta_y\Delta_z f> 0.
    \end{cases}
\end{equation} 
Later, we will see that we can pass back to the original condition \eqref{e: delta class definition}.

\subsection{Optimal Projection Cycle-Vectors  on a Finite Set} 

We start by investigating properties of projection cycle-vectors that give the following maximum
\begin{equation}\label{e: maximum-over-a-finite-set}
    \max_{(p,\lambda)\in [A\times B\times C]} \frac{\sum_{}\lambda_if(x_i)}{\sum|\lambda_i|},
\end{equation}
where \( A,B,C \subset [0,1]\) are the sets chosen arbitrary with \(\{0,1\}\subset A \cap B \cap C\) and \(f \in C(\Omega)\) is a function satisfying the conditions \eqref{e: delta class definition} or \eqref{e:strict-class-definition}.
\\
Let us give some further definitions and auxiliary facts.
\begin{definition}
    Let's call two points \((\xi_1,\gamma_1,\zeta_1)\) and \((\xi_2,\gamma_2,\zeta_2)\) of the space \(\mathbb{R}^3\) \textbf{well-ordered} if either \[\xi_1\ge \xi_2,\gamma_1 \ge \gamma_2 ,\zeta_1 \ge \zeta_2  \
    \text{ or } \
    \xi_1\le \xi_2,\gamma_1 \le \gamma_2 ,\zeta_1 \le \zeta_2.\]
    In other words, two points are called well-ordered if all components of one of the points are not smaller than the corresponding coordinates of the other point.
\end{definition}
If two points\((\xi_1,\gamma_1,\zeta_1)\) and \((\xi_2,\gamma_2,\zeta_2)\) are not well-ordered, then, without loss of generality, we can assume that:
    \[\xi_1\ge \xi_2, \ \gamma_1 > \gamma_2, \ \zeta_1 <\zeta_2.\]
All other versions can be transferred into this, either by changing the names of variables or the names of points.

Now we present an important lemma, which is a direct consequence of the inequalities \eqref{e:strict-class-definition}.
\begin{lemma} \label{main-func-ineq}
For any function \(f \in C(\Omega)\) satisfying \eqref{e:strict-class-definition} and any two points \(x_1=(\xi_1, \gamma_1, \zeta_1),x_2=(\xi_2, \gamma_2, \zeta_2) \in \Omega\) the inequality
\begin{equation} \label{e:ordering}
    f(\xi_1, \gamma_1, \zeta_1)+f(\xi_2, \gamma_2, \zeta_2) \le f\left((\xi_1,\xi_2),( \gamma_1,\gamma_2)(\zeta_1,\zeta_2)\right) +f\left([\xi_1,\xi_2],[\gamma_1,\gamma_2],[\zeta_1,\zeta_2)]\right)
\end{equation}
holds, where we denote \((a,b)=\min(a,b)\) and \([a,b]=\max(a,b)\). The equality happens iff \(x_1\) and \(x_2\) are well-ordered.
\end{lemma}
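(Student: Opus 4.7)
The plan is to split into two cases based on whether $x_1$ and $x_2$ are well-ordered. In the well-ordered case, the componentwise minimum and maximum simply recover the pair $\{x_1,x_2\}$ (up to reordering), so both sides of \eqref{e:ordering} are identical and equality is trivial. For the non-well-ordered case, the paragraph preceding the lemma reduces us to the normal form $\xi_1\ge\xi_2$, $\gamma_1>\gamma_2$, $\zeta_1<\zeta_2$. Writing $a=\xi_2$, $A=\xi_1$, $b=\gamma_2$, $B=\gamma_1$, $c=\zeta_1$, $C=\zeta_2$, the target inequality becomes
\[
f(A,B,c)+f(a,b,C)\;\le\;f(a,b,c)+f(A,B,C),
\]
with $A\ge a$, $B>b$, $C>c$, and we need it to be strict.

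I would rearrange this as $G(A,B)\ge G(a,b)$, where
\[
G(x,y)\;:=\;f(x,y,C)-f(x,y,c).
\]
A direct computation shows that $G(A,y)-G(a,y)$ and $G(x,B)-G(x,b)$ are exactly the mixed second differences $\Delta_{(A-a,0,0)}\Delta_{(0,0,C-c)}f$ and $\Delta_{(0,B-b,0)}\Delta_{(0,0,C-c)}f$, evaluated at the appropriate base points. By the hypotheses \eqref{e:strict-class-definition}, these are both nonnegative, so $G$ is coordinatewise nondecreasing on the rectangle $[a,A]\times[b,B]$. Chaining
\[
G(a,b)\;\le\;G(A,b)\;\le\;G(A,B)
\]
then yields the desired inequality. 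Note that the third hypothesis $\Delta_x\Delta_y f\ge 0$ plays no role here; only the two conditions that pair $z$ with $x$ and $z$ with $y$ are used.

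The mildly delicate point, and the one I would flag as the main obstacle, is strictness: since $A-a=\xi_1-\xi_2$ may be zero, the first link in the chain can collapse, and we cannot extract strictness from $\Delta_x\Delta_z f>0$ alone. However, we always have $B-b>0$ and $C-c>0$ in the normalized form, so the second link $G(A,b)<G(A,B)$ is strict by $\Delta_y\Delta_z f>0$. This is precisely why two of the three strict mixed-difference conditions are essential, rather than just one. Combining the two cases gives the equality-iff-well-ordered clause.
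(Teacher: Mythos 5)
Your proof is correct and follows essentially the same route as the paper: your chain $G(a,b)\le G(A,b)<G(A,B)$ is exactly the paper's decomposition of the difference into an $(x,z)$ mixed second difference plus a $(y,z)$ mixed second difference through the intermediate points $(\xi_1,\gamma_2,\zeta_1)$ and $(\xi_1,\gamma_2,\zeta_2)$. Your explicit handling of strictness when $\xi_1=\xi_2$ (drawing it from the $(y,z)$ link alone) is a point the paper passes over silently, and it is handled correctly.
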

\begin{proof} 
    If \(x_1\) and \(x_2\) are well-ordered, then the expressions on the left- and right-hand sides become the same, and the equality holds.\\
    If \(x_1\) and \(x_2\) are not well-ordered, without loss of generality we can assume that \(\xi_1 \ge \xi_2\), \(\gamma_1 > \gamma_2\) and \(\zeta_1 < \zeta_2\).
    Thus, we have
    \[
    (\xi_1,\xi_2)=\xi_2, (\gamma_1,\gamma_2)=\gamma_2,(\zeta_1,\zeta_2)=\zeta_1
    \text{
     and 
    }
    [\xi_1,\xi_2]=\xi_1, [\gamma_1,\gamma_2]=\gamma_1,[\zeta_1,\zeta_2]=\zeta_2.
    \]
Let's name \(x_1'=(\xi_1, \gamma_1, \zeta_2),x_2'=(\xi_2, \gamma_2, \zeta_1)\), \(x_1''=(\xi_1, \gamma_2, \zeta_1) \) and \( x_2''=(\xi_1, \gamma_2, \zeta_2)\).
The inequality is equivalent to
    \[
    f(x_1') + f(x_2') - f(x_1) - f(x_2)
    \]
    \[
    = \left( f(x_2') + f(x_2'') - f(x_1'') - f(x_2)\right) + \left(\left( f(x_1'') + f(x_1') - f(x_2'') - f(x_1)\right)\right) >0,
    \]
which, directly follows from \eqref{e:strict-class-definition}(See: Figure 1).
\end{proof}
\begin{figure}[ht]
\centering
\begin{tikzpicture}[scale=0.3, line join=round, line cap=round]

\draw[->, thick, red] (0,0,13) -- (10,0,13) node[below left] {$x$};
\draw[->, thick, red] (0,0,13) -- (0,10,13) node[left] {$z$};
\draw[->, thick, red] (0,0,13) -- (0,0,-3) node[left] {$y$};

\coordinate (xi') at (2,3,11);
\coordinate (xj') at (7,3,4);
\coordinate (xi) at (2,7,11);
\coordinate (xj) at (7,7,4);

\coordinate (xip) at (7,7,11);
\coordinate (xjp) at (7,3,11);

\fill[black!30, opacity=0.111] (0,0,0)--(9,0,0)--(9,0,13) -- (0,0,13);
\fill[black!30, opacity=0.204] (0,0,0)--(0,8,0)--(0,8,13) -- (0,0,13);
\fill[black!30, opacity=0.3] (0,0,0)--(0,8,0)--(9,8,0)--(9,0,0);
\fill[cyan!30, opacity=0.313] (xi) -- (xi') -- (xjp) -- (xip) -- cycle;
\fill[red!30, opacity=0.313] (xj') -- (xj) -- (xip) -- (xjp) -- cycle;

\draw[thick, green!60!black] (xj') -- (xjp);
\draw[thick, green!60!black] (xjp) -- (xip);
\draw[thick, green!60!black] (xjp) -- (xi');

\draw[thick, green!60!black] (xj) -- (xip);
\draw[thick, green!60!black] (xi) -- (xip);

\draw[thick, green!60!black] (xi') -- (xi);
\draw[thick, green!60!black] (xj') -- (xj);

\filldraw[red] (xip) circle (3pt)
node[above left] {$x_2''$};
\filldraw[red] (xjp) circle (3pt)
node[right] {$x_1''$};

\filldraw[black] (xi') circle (3pt) node[left] {$x_2'$};
\filldraw[black] (xi) circle (3pt) node[left] {$x_2$};
\filldraw[black] (xj') circle (3pt) node[right] {$x_1$};
\filldraw[black] (xj) circle (3pt) node[right] {$x_1'$};

\end{tikzpicture}
         \label{f:Lemma 2.1}
         \caption{Geometric interpretation of the proof of Lemma \ref{main-func-ineq}}
\end{figure}

\begin{definition}
    A projection cycle-vector \((p^0,\lambda^0) \in [S]\) is called \textbf{optimal} for the function \(f\) on the finite set \(S\) if 
    \[
    \frac{\sum_{}\lambda^0_if(x^0_i)}{\sum|\lambda^0_i|}=\max_{(p,\lambda)\in [A\times B\times C]} \frac{\sum_{}\lambda_if(x_i)}{\sum|\lambda_i|}.
    \]
    The set of all \textbf{optimal projection cycle-vectors} for \(f\) on \(S\) is denoted by \([S]_f\).
\end{definition}
Obviously, for any \((p,\lambda) \in [S]_f\) we have
\begin{equation} \label{e:orthonormal sum}
    \frac{\sum_{}\lambda_if(x_i)}{\sum|\lambda_i|} \ge 0,
\end{equation}
as otherwise, taking \((p,-\lambda)\) will contradict the optimality of \((p,\lambda)\). And, moreover since \([S]\) is finite, \([S]_f\) is always non-empty.

\begin{definition}
    The sum of two projection cycles-vectors is a projection cycle-vector on the union of the points of the given projection cycle-vectors, with weight at each point equal to the sum of the weights of the given projection cycle-vectors on that point. If we encounter any points with a weight of zero, we would ignore them.
\end{definition}
We can express the above definition by formulas as follows: 
\[(p^1,\lambda^1)+(p^2,\lambda^2)=(p,\lambda),\] where
\(
p=\left( p^1 \cup p^2\right) \backslash p^*
\),  $p^*=\{q:q \in p^1 \cap p^2, \lambda^1(q)+\lambda^2(q)=0 \}$
and
\begin{equation}\label{sum of two projection-cycle-vectors}
\lambda(x)=\begin{cases}
    \lambda^1(x), \hspace{45pt} x \in p^1\backslash p^2 \\
    \lambda^1(x)+ \lambda^2(x), \hspace{8pt} x \in \left(p^1 \cap p^2 \right) \backslash p^*\\
    \lambda^2(x), \hspace{43pt}  x \in p^2\backslash p^1 \\
\end{cases}
\end{equation}
for any two projection cycle-vectors $(p^1,\lambda^1)$ and $(p^2,\lambda^2)$. 

Clearly, if both \((p^1,\lambda^1) \in [S] \) and \((p^2,\lambda^2) \in [S]\) for some finite set \(S\), then \[(p^1,\lambda^1)+(p^2,\lambda^2) \in [S]\] since \(p^1 \subset S\) and \(p^2 \subset S\) is equivalent to \(p^1 \cup p^2 \subset S\).
Moreover for any function \(f\) defined on \([0,1]^3\), we have
\[
\sum_{x_i \in p}\lambda_if(x_i)=\sum_{x_j \in p^1}\lambda^1_jf(x_j)+\sum_{x_k \in p^2}\lambda^2_kf(x_k).
\]
In fact, we will never encounter a situation of adding two projection cycle-vectors of the form \((p, \lambda)\) and \((p, -\lambda)\). However, for the sake of correctness, we can still denote this sum as a \textbf{vanishing} projection cycle-vector. That is a projection cycle, with no points and no weights, and will not be considered an element of \([S]\), for any set \(S\).

The following lemma demonstrates our way of constructing a “more optimal" projection-cycle vector from the given “non-optimal" one.
\begin{lemma} \label{lemma-denominators}
    Let \((p^1,\lambda^1)\) and \((p^2,\lambda_\varepsilon)\) be two projection cycle vectors, where \(\lambda_\varepsilon\) is a vector of length \(m\) such that each component is \(\varepsilon\) or \(-\varepsilon\) for some positive real number \(\varepsilon\). Let \(k\) be the number of common points of \(p^1\) and \(p^2\), such that the corresponding components of the vectors \(\lambda^1\) and \(\lambda_\varepsilon\) have opposite signs. If \(k \ge \frac{m}{2} \), then for
    \[
    0 < \varepsilon \le \min_{\substack{x \in p^1 \cap p^2 \\ \lambda^1(x)\lambda_\varepsilon(x)<0}}|\lambda^1(x)|
    \] we have
    \begin{equation}
        \sum_{x \in p}|\lambda(x)| \le \sum_{x \in p^1}|\lambda^1(x)|,
    \end{equation}
    where \((p,\lambda)=(p^1,\lambda^1)+(p^2,\lambda_\varepsilon)\).
\end{lemma}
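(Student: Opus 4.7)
The plan is to track the effect of adding $(p^2,\lambda_\varepsilon)$ on the $\ell^1$ weight by partitioning the points of $p^1 \cup p^2$ into three disjoint classes and computing the net change explicitly. Since the bound on $\varepsilon$ guarantees that at each point in $p^1\cap p^2$ where the weights have opposite signs we have $|\lambda_\varepsilon(x)| \le |\lambda^1(x)|$, the sign of $\lambda^1(x)+\lambda_\varepsilon(x)$ is preserved (or that value becomes $0$), so the three cases below are algebraically clean.

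First, write
\[
\sum_{x \in p}|\lambda(x)| = \sum_{x \in p^1\setminus p^2}|\lambda^1(x)| + \sum_{x \in p^1 \cap p^2}|\lambda^1(x)+\lambda_\varepsilon(x)| + \sum_{x \in p^2\setminus p^1}|\lambda_\varepsilon(x)|.
\]
Denote by $k$ the number of points of $p^1\cap p^2$ where $\lambda^1$ and $\lambda_\varepsilon$ have opposite signs, by $s$ the number where they share a sign, and by $m-k-s$ the number of points in $p^2\setminus p^1$ (which accounts for all $m$ components of $\lambda_\varepsilon$). At each of the $k$ opposite-sign common points, the choice of $\varepsilon$ yields $|\lambda^1(x)+\lambda_\varepsilon(x)| = |\lambda^1(x)|-\varepsilon$, contributing a decrease of $k\varepsilon$ relative to $\sum_{x\in p^1}|\lambda^1(x)|$. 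At each of the $s$ same-sign common points, we get $|\lambda^1(x)+\lambda_\varepsilon(x)| = |\lambda^1(x)|+\varepsilon$, contributing an increase of $s\varepsilon$. Finally, the $m-k-s$ new points each contribute $\varepsilon$, for an additional increase of $(m-k-s)\varepsilon$.

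Summing, the net change is
\[
\sum_{x\in p}|\lambda(x)| - \sum_{x \in p^1}|\lambda^1(x)| = -k\varepsilon + s\varepsilon + (m-k-s)\varepsilon = (m-2k)\varepsilon,
\]
and the hypothesis $k \ge m/2$ makes this quantity non-positive, giving the desired inequality.

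There is no real obstacle here; the only subtlety is to ensure the case-split above accounts for every component of $\lambda_\varepsilon$ exactly once (so that the three counts sum to $m$) and to confirm that the bound on $\varepsilon$ prevents any cancellation from over-shooting, i.e., $|\lambda^1(x)+\lambda_\varepsilon(x)| = |\lambda^1(x)| - \varepsilon$ rather than $\varepsilon - |\lambda^1(x)|$ at opposite-sign common points. Both are immediate from the statement, so the proof is essentially the bookkeeping above.
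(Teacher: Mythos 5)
Your proposal is correct and follows essentially the same route as the paper: decompose the new $\ell^1$ sum over $p^1\setminus p^2$, the same-sign and opposite-sign parts of $p^1\cap p^2$, and $p^2\setminus p^1$, use the bound on $\varepsilon$ to get an exact decrease of $\varepsilon$ at each opposite-sign point, and conclude from $k\ge m/2$. The only cosmetic difference is that you use equality at same-sign common points where the paper invokes the triangle inequality, which changes nothing.
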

\begin{proof}
    By the equation \eqref{sum of two projection-cycle-vectors} we have
    \begin{equation*}
    \sum_{x \in p}|\lambda(x)|-\sum_{x \in p^1}|\lambda^1(x)|=\sum_{x \in p^1 \cap p^2}\left(|\lambda^1(x)+\lambda_\varepsilon(x)|-|\lambda^1(x)|\right)+\sum_{x \in p^2\backslash p^1}\varepsilon.
    \end{equation*}
    The first summand can be written as
    \[
    \sum_{x \in p^1 \cap p^2}\left(|\lambda^1(x)+\lambda_\varepsilon(x)|-|\lambda^1(x)|\right)=
    \]
    \[
    =\sum_{\substack{x \in p^1 \cap p^2 \\ \lambda_1(x)\lambda_\varepsilon(x)<0}}\left(|\lambda^1(x)+\lambda_\varepsilon(x)|-|\lambda^1(x)|\right)+\sum_{\substack{x \in p^1 \cap p^2 \\ \lambda_1(x)\lambda_\varepsilon(x)>0}}\left(|\lambda^1(x)+\lambda_\varepsilon(x)|-|\lambda^1(x)|\right).
    \]
    By the triangle inequality, we have 
    \[
    \sum_{\substack{x \in p^1 \cap p^2 \\ \lambda_1(x)\lambda_\varepsilon(x)>0}}\left(|\lambda^1(x)+\lambda_\varepsilon(x)|-|\lambda^1(x)|\right) \le \sum_{\substack{x \in p^1 \cap p^2 \\ \lambda_1(x)\lambda_\varepsilon(x)>0}}|\lambda_\varepsilon(x)|=\sum_{\substack{x \in p^1 \cap p^2 \\ \lambda_1(x)\lambda_\varepsilon(x)>0}}\varepsilon.
    \]
    On the other hand, for  \[ 0 < \varepsilon \le \min_{\substack{x \in p^1 \cap p^2 \\ \lambda_1(x)\lambda_\varepsilon(x)<0}}|\lambda^1(x)|\] we have
    \[
    \sum_{\substack{x \in p^1 \cap p^2 \\ \lambda_1(x)\lambda_\varepsilon(x)<0}}\left(|\lambda^1(x)+\lambda_\varepsilon(x)|-|\lambda^1(x)|\right)=-\sum_{\substack{x \in p^1 \cap p^2 \\ \lambda_1(x)\lambda_\varepsilon(x)<0}} \varepsilon.
    \]
    Thus, we have
    \[
    \sum_{x \in p}|\lambda(x)|-\sum_{x \in p^1}|\lambda^1(x)| \le -\sum_{\substack{x \in p^1 \cap p^2 \\ \lambda_1(x)\lambda_\varepsilon(x)<0}} \varepsilon + \sum_{\substack{x \in p^1 \cap p^2 \\ \lambda_1(x)\lambda_\varepsilon(x)>0}}\varepsilon + \sum_{x \in p^2\backslash p^1}\varepsilon \le 0.
    \]
    Where the last inequality follows from the fact that the number of summands with a negative sign is \(k\), the number of all summands is \(m\), and \(k \ge \frac{m}{2}\)
\end{proof}

The next theorem deals with the geometric structure of optimal projection cycles. 
\begin{theorem} \label{t: Theorem 2.2}
    Let \((p,\lambda)\) be a projection cycle-vector, that is optimal for a function \(f \in C(\Omega)\) satisfying \eqref{e:strict-class-definition} on the set \(S=S_x\times S_y\times S_z\) for some finite sets \(S_x, S_y, S_z \subset [0,1]\) with \(\{0,1\}\subset S_x \cap S_y \cap S_z\). Then the following propositions hold:
    \begin{enumerate}

        \item[(a)] If \(\lambda_k,\lambda_j\) are both positive for some indices \(k,j\) then, \(x_k\) and \(x_j\) are well-ordered,
        \item[(b)] Let us call a plane on \(\mathbb{R}^3\) an interior plane of the unit cube if it is parallel to one of the faces of the unit cube and passes through an interior point of the cube. If \(p\) contains a well-ordered pair of points  \(x_i,x_j\) lying on the same interior plane, then, \(\lambda_i, \lambda_j>0\),
        \item[(c)] If \(\lambda_i<0 \), then the point \(x_i\) belongs to one of the edges of the unit cube that is not adjacent to  any of \((0,0,0)\) and \((1,1,1)\),
        \item[(d)]  If any point \(x_i\) of \(p\), other than \((0,0,0)\) and \((1,1,1)\), belongs to one of the edges of the unit cube, then \(\lambda_i<0\),
        \item[(e)] Both of the points \((0,0,0)\) and \((1,1,1)\) belong to \(p\) and, both have positive weights.
    \end{enumerate}
\end{theorem}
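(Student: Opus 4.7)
The plan is to prove each of (a)--(e) by contradiction: supposing the stated conclusion fails for some pair or point of $p$, one adds to $(p,\lambda)$ a carefully chosen weak projection cycle-vector $(q,\mu_\varepsilon)$ (with small parameter $\varepsilon>0$) supported in $S_x\times S_y\times S_z$, so that $(p,\lambda)+(q,\mu_\varepsilon)\in X_S$ attains a strictly larger ratio $\sum\lambda_i f(x_i)/\sum|\lambda_i|$, contradicting the optimality of $(p,\lambda)$. The two workhorses are Lemma \ref{main-func-ineq} in its strict form, which gives a positive lower bound on the numerator change whenever a non-well-ordered pair is reshuffled into a well-ordered one, and Lemma 2.2, which gives non-growth of $\sum|\lambda_i|$ provided $(q,\mu_\varepsilon)$ shares enough common opposite-sign points with $(p,\lambda)$.

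Part (a) is the model case. For positive weights $\lambda_k,\lambda_j$ at a non-well-ordered pair $x_k,x_j$, let $m,M$ denote their coordinate-wise minimum and maximum and let $(q,\mu_\varepsilon)$ be the four-point weak cycle with values $-\varepsilon,-\varepsilon,+\varepsilon,+\varepsilon$ at $x_k,x_j,m,M$; a direct check on each coordinate axis confirms the projection-cycle identity. For $\varepsilon\le\min(\lambda_k,\lambda_j)$, the numerator strictly increases by $\varepsilon\bigl[f(m)+f(M)-f(x_k)-f(x_j)\bigr]>0$ by Lemma \ref{main-func-ineq}, while Lemma 2.2 (with $k=m/2=2$ supplied by the two opposite-sign common points $x_k,x_j$) shows that $\sum|\lambda_i|$ does not grow. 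Combined with $\sum\lambda_i f(x_i)\ge0$ from \eqref{e:orthonormal sum}, the ratio strictly increases---a contradiction.

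Parts (b)--(e) follow the same template with more delicate auxiliary cycles. For (b), if a well-ordered pair $x_i,x_j$ on the interior plane $y=\gamma$ has $\lambda_j<0$, one builds a rectangular cycle within $y=\gamma$, possibly coupled with its translate to a level $\gamma'\in S_y$ or merged with negative-weight points of $p$ already lying on $y=\gamma$, so that Lemma 2.2 applies while Lemma \ref{main-func-ineq} drives the numerator up via $\Delta_x\Delta_z f>0$ on the well-ordered pair. For (c) and (d), a negative-weight point off the middle edges, or a positive-weight point on any non-corner edge, always has a coordinate in $(0,1)$ along which an analogous ``split-rectangle'' perturbation can be placed to improve the ratio. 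Finally (e) is the extremal step: by (a)--(d) the positive-weight support is a chain $q_1<\cdots<q_s$ off the non-corner edges, and the projection-balance identity at the smallest $x$-, $y$-, and $z$-coordinates of $p$ forces $q_1=(0,0,0)$, since any smaller coordinate value would host only negative weights (the positives live on the chain) and their sum cannot vanish; symmetrically $q_s=(1,1,1)$.

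The principal obstacle lies in parts (b)--(d). The critical subcase in (b) is $\lambda_i>0,\lambda_j<0$, where the plain four-point rectangle supplies only $k=1<m/2$ opposite-sign overlap with $p$ and thus falls short of Lemma 2.2's hypothesis; the cycle must be enriched by coupling with a second level in $S_y$, or by absorbing existing negative-weight points of $p$, and the correct choice is dictated by the geometric position of the pair. Parts (c) and (d) hide similar bookkeeping. A secondary technicality is that the auxiliary perturbation may introduce new coordinates: this is harmless because $X_S\subset X_{S'}$ whenever $S\subset S'$, so we may freely enlarge $S$ by finitely many new grid points without losing optimality, and the contradiction argument goes through in the enlarged grid.
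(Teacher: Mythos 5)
Your part (a) is essentially the paper's own argument (the four--point cycle on $x_k,x_j$ and their coordinatewise $\min$ and $\max$ with weights $(-\varepsilon,-\varepsilon,\varepsilon,\varepsilon)$, Lemma 2.2 with $k=2=\tfrac m2$, strict increase of the numerator from Lemma \ref{main-func-ineq}). But beyond (a) the proposal has genuine gaps, and you flag the main one yourself: the mixed--sign subcase of (b), $\lambda_i>0>\lambda_j$, is left unresolved (``the cycle must be enriched \dots the correct choice is dictated by the geometric position of the pair''). That subcase is exactly where the work lies, and the resolution is simpler than either of your suggested fixes: since $\{0,1\}\subset S_x\cap S_y\cap S_z$, one projects the well--ordered pair onto one of the two faces parallel to its plane. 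In the paper's orientation (pair on $z=\zeta$, $x_i\le x_j$ coordinatewise) one takes, for $\lambda_i>0>\lambda_j$, the cycle $p_0=(x_i,x_j,x_i',x_j')$ with $x_i'=(\xi_i,\gamma_i,0)$, $x_j'=(\xi_j,\gamma_j,0)$ and $\lambda_\varepsilon=(-\varepsilon,\varepsilon,\varepsilon,-\varepsilon)$ (for $\lambda_i<0<\lambda_j$ one uses the face $z=1$ with the opposite signs). This places opposite signs at \emph{both} $x_i$ and $x_j$, so Lemma 2.2 applies with $k=2=\tfrac m2$, and the numerator changes by $\varepsilon\bigl(f(x_j)+f(x_i')-f(x_i)-f(x_j')\bigr)>0$ by Lemma \ref{main-func-ineq} applied to the crossed, non--well--ordered pair $x_i,x_j'$ (whose $\min$ and $\max$ are $x_i'$ and $x_j$). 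No absorption of other points of $p$ is needed; the both--negative case is the in--plane swap, as you say.

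For (c) and (d) your plan of a direct ``split--rectangle'' perturbation that ``always'' improves the ratio is not an argument: for a negative--weight interior point no single rectangle visibly improves the ratio without knowing where the rest of $p$ sits. The paper instead deduces (c) and (d) almost entirely from (a), (b) and the plane--balance condition: each interior coordinate plane through the point in question carries a balancing point of the opposite sign; by (b) these cannot be well--ordered with the given point, and one checks that the resulting positive points on two interior planes cannot be well--ordered with each other, contradicting (a); only one subcase of (d) (the balancing negative point on the same middle edge) requires a new four--point perturbation. Your (e) is also incomplete: the balance argument you give only shows that the minimal element $q_1$ of the chain of positive points is the componentwise minimum of $p$, not that this minimum is $(0,0,0)$; the missing step is the descent via (b): if $q_1\neq(0,0,0)$ then (using (d)) some coordinate of $q_1$ lies in $(0,1)$, the corresponding interior plane carries a negative point, which by (b) is not well--ordered with $q_1$, hence has a strictly smaller coordinate in some direction, and the plane at that level carries a positive point strictly below $q_1$ --- contradiction. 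Finally, your closing remark that one may ``freely enlarge $S$ by finitely many new grid points without losing optimality'' is false: optimality of $(p,\lambda)$ is assumed only on $S$, and a better cycle in a larger grid contradicts nothing. The paper avoids this precisely because $S=S_x\times S_y\times S_z$ is a product with $0,1$ in each factor, so every auxiliary point used (coordinate swaps, projections to faces) already lies in $S$.
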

\begin{proof}

\textbf{(a) }Assume the contrary: Let \((p,\lambda)=((x_0,x_1,...,x_m),(\lambda_0,\lambda_1,...,\lambda_m)) \in [S]_f\) contain a non well-ordered pair of points \(x_k,x_j\) with \(\lambda_k>0,\lambda_j>0\).
    
    Let the coordinates of the points \(x_k,x_j\) be \((\xi_k,\gamma_k,\zeta_k)\) and \((\xi_j,\gamma_j,\zeta_j)\), respectively. Since \(x_k,x_j\) are not well-ordered, without loss of generality we can assume that \(\xi_k \ge \xi_j\), \(\gamma_k > \gamma_j\) and \(\zeta_k < \zeta_j\).
    
    Denote the points \((\xi_k,\gamma_k,\zeta_j)\) and \((\xi_j,\gamma_j,\zeta_k)\) by \(x_k'\) and \(x_j'\), respectively.
    Clearly, \(x_k', x_j' \in S\) and the projection cycle-vector \((p_0,\lambda_\varepsilon) \in [S]\) with \(p_0=(x_k,x_j,x_k',x_j')\) and \(\lambda_\varepsilon=(-\varepsilon,-\varepsilon,\varepsilon,\varepsilon)\) is well defined for any \(\varepsilon>0\). Let, \((p',\lambda')=(p,\lambda)+(p_0,\lambda_\varepsilon)\).
    
   Since \(\lambda(x_k)\) and \(\lambda(x_j)\) are positive, when \(\varepsilon>0\) is small enough by Lemma \ref{lemma-denominators} we have \( 0 \le \sum_{}|\lambda'_i| \le  \sum|\lambda_i|\). On the other hand, 
     \[
     \sum_{}\lambda'_if(x_i)-\sum_{}\lambda_if(x_i)=\varepsilon(f(x'_k)+f(x'_j)-f(x_k)-f(x_j))>0,
     \]
where the last inequality follows by Lemma \ref{main-func-ineq} applied to the points \(x_k\) and \(x_j\) and the positivity of \(\varepsilon\).

Now, since \(\sum_{}\lambda'_if(x_i) > \sum_{}\lambda_if(x_i) \ge 0 \) and \( 0<\sum_{}|\lambda'_i| \le \sum_{}|\lambda_i|\), we have
     \[
    \frac{\sum_{}\lambda'_if(x_i)}{\sum_{}|\lambda'_i|}-\frac{\sum_{}\lambda_if(x_i)}{\sum|\lambda_i|} \ge \frac{\sum_{}\lambda'_if(x_i)-\sum_{}\lambda_if(x_i)}{\sum_{}|\lambda_i|}>0.
     \]
This contradicts the optimality of $(p,\lambda)$.

\textbf{(b)} 
    Let \((p,\lambda)=((x_0,x_1,...,x_m),(\lambda_0,\lambda_1,...,\lambda_m))\) be an optimal projection cycle-vector for \(f\) on the set \(S\) that contains a well-ordered pair of points \(x_i,x_j\) from the same interior plane.
    
    Without loss of generality, we can assume that \(x_i\) and \(x_j\) are lying on the same plane parallel to the \(xOy\) plane, and thus we can write \(x_i=(\xi_i,\gamma_i,\zeta)\) and \(x_j=(\xi_j,\gamma_j,\zeta)\) for some numbers \(\xi_i,\xi_j,\gamma_i,\gamma_j \in[0,1]\) and some \(\zeta \in (0,1)\). 
    
    Assume \(\xi_i \le \xi_j\), then, since \(x_i,x_j\) are well-ordered we have \(\gamma_i \le \gamma_j\).
    If both \(\lambda_i,\lambda_j\) are not positive, have three cases:
    \begin{enumerate}
        \item[\textbf{ 1.}] \(\lambda_i>0>\lambda_j\)\\
        Take \(x'_i=(\xi_i,\gamma_i,0)\), \(x'_j=(\xi_j,\gamma_j,0)\), \(p_0=(x_i,x_j,x_i',x_j')\) and \(\lambda_\varepsilon=(-\varepsilon,\varepsilon,\varepsilon,-\varepsilon)\);
        \item[\textbf{ 2.}] \(\lambda_i<0<\lambda_j\)\\
        Take \(x'_i=(\xi_i,\gamma_i,1)\), \(x'_j=(\xi_j,\gamma_j,1)\), \(p_0=(x_i,x_j,x_i',x_j')\) and \(\lambda_\varepsilon=(\varepsilon,-\varepsilon,-\varepsilon,\varepsilon)\);
        \item[\textbf{ 3.}] \(\lambda_i,\lambda_j<0\)\\
        Take \(x'_i=(\xi_i,\gamma_j,\zeta)\), \(x'_j=(\xi_j,\gamma_i,\zeta)\), \(p_0=(x_i,x_j,x_i',x_j')\) and \(\lambda_\varepsilon=(\varepsilon,\varepsilon,-\varepsilon,-\varepsilon)\).
    \end{enumerate}
    \begin{figure}[ht]
\centering
\begin{tikzpicture}[scale=0.30, line join=round, line cap=round]

\draw[->, thick, red] (0,0,13) -- (10,0,13) node[below left] {$x$};
\draw[->, thick, red] (0,0,13) -- (0,10,13) node[left] {$z$};
\draw[->, thick, red] (0,0,13) -- (0,0,-3) node[left] {$y$};

\coordinate (xi') at (2,0,11);
\coordinate (xj') at (7,0,4);
\coordinate (xi) at (2,7,11);
\coordinate (xj) at (7,7,4);

\coordinate (xip) at (7,7,11);
\coordinate (xjp) at (7,0,11);
\fill[black!30, opacity=0.111] (0,0,0)--(9,0,0)--(9,0,13) -- (0,0,13);
\fill[black!30, opacity=0.204] (0,0,0)--(0,8,0)--(0,8,13) -- (0,0,13);
\fill[black!30, opacity=0.3] (0,0,0)--(0,8,0)--(9,8,0)--(9,0,0);
\fill[cyan!30, opacity=0.313] (xi) -- (xi') -- (xjp) -- (xip) -- cycle;
\fill[red!30, opacity=0.313] (xj') -- (xj) -- (xip) -- (xjp) -- cycle;


\draw[thick, green!60!black] (xj') -- (xjp);
\draw[thick, green!60!black] (xjp) -- (xip);
\draw[thick, green!60!black] (xjp) -- (xi');

\draw[thick, green!60!black] (xj) -- (xip);
\draw[thick, green!60!black] (xi) -- (xip);

\draw[thick, green!60!black] (xi') -- (xi);
\draw[thick, green!60!black] (xj') -- (xj);

\filldraw[red] (xip) circle (2pt);
\filldraw[red] (xjp) circle (2pt);

\filldraw[black] (xi') circle (2pt) node[left] {$x_i'$};
\filldraw[black] (xi) circle (2pt) node[left] {$x_i$};
\filldraw[black] (xj') circle (2pt) node[right] {$x_j'$};
\filldraw[black] (xj) circle (2pt) node[right] {$x_j$};

\end{tikzpicture}
    \caption{\textbf{Case 1:} We simply replace,the pair \(x_i,x_j\) with \(x'_i,x'_j\) }
\end{figure}

    In each case we have chosen \(\lambda_\varepsilon\) in a way that, at both of the points \(x_i\) and \(x_j\) the projection cycle-vectors \((p,\lambda)\) and \((p_0,\lambda_\varepsilon)\) have weights with opposite signs. Thus, by the Lemma \ref{lemma-denominators}, for \(\varepsilon>0\) small enough, we have 
    \[
    0 < \sum_{}|\lambda'_i| \le  \sum|\lambda_i|,
    \]
where \((p',\lambda')=(p,\lambda)+(p_0,\lambda_\varepsilon)\).

On the other hand, applying the Lemma \ref{main-func-ineq} gives
\[
\sum_{x \in p'}\lambda'(x)f(x) \ge \sum_{x \in p}\lambda(x)f(x).
\]
Hence, in all three cases, we will get a contradiction to the optimality of \((p,\lambda)\) simultaneously as in part \((a)\).

\textbf{(c)} Let \((p,\lambda)=(x_0,x_1,...,x_m,\lambda_0,\lambda_1,...,\lambda_m)\) be an optimal projection cycle-vector for \(f\) on the set \(S\) that contains a point \(x_i=(\xi_i,\gamma_i,\zeta_i)\) with \(\lambda_i<0\).

Consider the planes \(x=\xi_i,y=\gamma_i\) and \(z=\zeta_i\). By the definition of the projection cycle, each of these planes contains at least one point of \(p\) with positive weight. Let these points be \(x_j=(\xi_i,\gamma_j,\zeta_j),x_l=(\xi_l,\gamma_i,\zeta_l),x_k=(\xi_k,\gamma_k,\zeta_i)\).
If any of these positive-weighted points makes a well-ordered pair with \(x_i\), then the projection cycle-vector \((p,\lambda)\) cannot be optimal by part \((b)\). 
\begin{figure}[ht]
        \centering
        \begin{tikzpicture}[scale=0.26, line join=round, line cap=round]
\draw[->, thick, black] (0,0,-3) -- (15,0,-3) node[below] {$x$};
\draw[->, thick, black] (0,0,-3) -- (0,15,-3) node[left] {$z$};
\draw[->, thick, black] (0,0,-3) -- (0,0,-16) node[left] {$y$};
\fill[black!30, opacity=0.1] (0,0,-3) -- (14,0,-3) -- (14,0,-12) -- (0,0,-12) -- cycle;
\fill[black!30, opacity=0.3] (0,0,-3) -- (0,12,-3) -- (0,12,-12) -- (0,0,-12) -- cycle;
\fill[black!30, opacity=0.2] (14,0,-12) -- (14,12,-12) -- (0,12,-12) -- (0,0,-12) -- cycle;

\coordinate (xi) at (8,6,-6);

\fill[cyan!30, opacity=0.513] (xi) -- (8,6,-12) -- (8,0,-12) -- (8,0,-6) -- cycle;
\fill[red!30, opacity=0.513] (xi) -- (8,6,-12) -- (0,6,-12) -- (0,6,-6) -- cycle;
\fill[green!30, opacity=0.513] (xi) -- (8,12,-6) -- (0,12,-6) -- (0,6,-6) -- cycle;
\fill[green!30, opacity=0.513] (xi) -- (8,0,-6) -- (14,0,-6) -- (14,6,-6) -- cycle;
\fill[cyan!30, opacity=0.513] (xi) -- (8,6,0) -- (8,12,0) -- (8,12,-6) -- cycle;
\fill[red!30, opacity=0.513] (xi) -- (8,6,0) -- (14,6,0) -- (14,6,-6) -- cycle;
\draw[thick, black!60!black] (xi) -- (8,6,-12) -- (8,0,-12) -- (8,0,-6) -- cycle;
\draw[thick, black!60!black] (xi) -- (8,6,-12) -- (0,6,-12) -- (0,6,-6) -- cycle;
\draw[thick, black!60!black] (xi) -- (8,12,-6) -- (0,12,-6) -- (0,6,-6) -- cycle;
\draw[thick, black!60!black] (xi) -- (8,0,-6) -- (14,0,-6) -- (14,6,-6) -- cycle;
\draw[thick, black!60!black] (xi) -- (8,6,0) -- (8,12,0) -- (8,12,-6) -- cycle;
\draw[thick, black!60!black] (xi) -- (8,6,0) -- (14,6,0) -- (14,6,-6) -- cycle;

\filldraw[black] (xi) circle (5pt) node[label={[xshift=0.20cm, yshift=0.1cm]{$x_i$}}]{};

\end{tikzpicture}
        \caption{The possible locations of the points that balances \(x_i\)} due to part \((b)\).
        \label{fig:(c)}
\end{figure}
 
Assume both \(0<\xi<1\) and \(0<\gamma<1\). By the part \((b)\) we have that \((x_i,x_j)\) is not a well-ordered pair,thus, either \(\gamma_i< \gamma_j, \zeta_i > \zeta_j\) or \(\gamma_i> \gamma_j, \zeta_i < \zeta_j\). Similarly, \((x_i,x_l)\) is not a well-ordered pair, thus, either \(\xi_i< \xi_l, \zeta_i > \zeta_l\) or \(\xi_i> \xi_l, \zeta_i < \zeta_l\). It is easy to verify that in each of the four possible cases, \(x_j,x_l\) cannot be a well-ordered pair. By part \((a)\) if \((p,\lambda)\) is optimal, then any pair of points \(x_j,x_l,x_k\) should be well-ordered. Thus, \((p,\lambda)\) cannot be optimal, and we get a contradiction.
Since we choose \(\xi,\gamma\) arbitrarily from \(\xi,\gamma,\zeta\), this contradiction works if at least two of the planes \(x=\xi_i,y=\gamma_i\) and \(z=\zeta_i\) are inner planes, which happens when \(x_i\) is not on the edges of the unit cube.
Hence, \(x_i\) belongs to one of the edges of the unit cube. 

Now, if, \(x_i\) and \((0,0,0)\) are on the same edge, we have three cases: \(x_i=(\xi,0,0)\) or \(x_i=(0,\gamma,0)\) or \(x_i=(0,0,\zeta)\). Assume that \(x_i=(\xi,0,0)\), then on the inner plane \(x=\xi\) there exist another point \(x_j=(\xi,\gamma_j,\zeta_j) \in p\) with \(\lambda_j>0\). Moreover, since \(\gamma_j,\zeta_j \ge 0\), \(x_i\) and \(x_j\) are well-ordered. But this contradicts the optimality of \((p,\lambda)\), by part \((b)\). Similarly, in the other two cases, we get a contradiction.

Using similar arguments, it can be proven that \(x_i\) and \((1,1,1)\) cannot be on the same edge.

Thus, \(x_i\) belongs to one of the edges of the cube, which does not contain any of the vertices \((0,0,0)\) and \((1,1,1)\).
Let's name these edges \(l_1,...,l_6\) as following 
\begin{align*}    
    l_1=\{(\xi,0,1), 0 \le \xi \le 1\}, \\
    l_2=\{(1,0,\zeta), 0 \le \zeta \le 1\},\\
    l_3=\{(1,\gamma,0), 0 \le \gamma \le 1\}, \\
    l_4=\{(\xi,1,0), 0 \le \xi \le 1\},\\
    l_5=\{(0,1,\zeta), 0 \le \zeta \le 1\}, \\
    l_6=\{(0,\gamma,1), 0 \le \gamma \le 1\}.\\
\end{align*}

\textbf{(d)}
    Assume the contrary, let \(x_i \in p\) be a point on one of the edges of the unit cube with \(\lambda_i>0\). We have three cases:
    \textbf{Case 1:} \(x_i\) is on the same edge with one of the vertices \((0,0,0)\) or \((1,1,1)\).\\
    If \(x_i\) and \((0,0,0)\) are on the same edge, have three cases: \(x_i=(\xi,0,0)\) or \(x_i=(0,\gamma,0)\) or \(x_i=(0,0,\zeta)\). Assume \(x_i=(\xi,0,0)\), then on the inner plane \(x=\xi\) there exists another point \(x_j=(\xi,\gamma_j,\zeta_j) \in p\) with \(\lambda_j<0\). Moreover, since \(\gamma_j,\zeta_j \ge 0\), \(x_i\) and \(x_j\) are well-ordered. But this contradicts the optimality of \((p,\lambda)\), by part \((b)\). 
    
    Similarly, in the other two cases, we get a contradiction.
    Using similar arguments, it can be proven that \(x_i\) and \((1,1,1)\) cannot be on the same edge.
    
    \textbf{Case 2:} \(x_i\) belongs to one of the edges of the cube, not containing any of the vertices \((0,0,0)\) and \((1,1,1)\). Without loss of generality, we can assume that \(x_i=(\xi,0,1)\). To make the sum of weights on the plane \(z=1\) equal to zero, there should exist a point \(x_j\) of \(p\) with a negative weight on \(z=1\). By part \((c)\) the point \(x_j\) belongs to at least one of the edges \(l_1\) or \(l_6\). Thus, we have two sub-cases:
    
    \textbf{Case 2.1:} \(x_j=(\xi_j,0,1)\) for some \(0 \le \xi_j \le 1\).  Clearly, \(\xi_j \ne \xi\). We should consider two sub-cases:
    
    \textbf{Case 2.1.1:} \(\xi_j < \xi\)\\
    Consider the projection cycle \((p, \lambda_\varepsilon)\), where \(p_0=(x_i, x_j, (\xi,1,1), (\xi_j,1,1))\) and \(\lambda_\varepsilon=(-\varepsilon,\varepsilon,\varepsilon,-\varepsilon)\);
    
    \textbf{Case 2.1.2:} \(\xi < \xi_j\)\\
    Consider the projection cycle \((p_0, \lambda_\varepsilon)\), where \(p_0=(x_i, x_j, (\xi,0,0), (\xi_j,0,0))\) and \(\lambda_\varepsilon=(\varepsilon,-\varepsilon,-\varepsilon,\varepsilon)\).

    Similarly to part \((a)\) and part \((b)\), taking \((p',\lambda')=(p,\lambda)+(p_0,\lambda_\varepsilon)\) we get a contradiction to the optimality of \((p,\lambda)\).
    
    \textbf{Case 2.2:} \(x_j=(0,\gamma,1)\) for some \(0 < \gamma \le 1\). 
    Consider the plane, \(y=\gamma\). To make the sum of weights on the plane \(y=\gamma\) equal to zero, there should exist a point \(x_k\) of \(p\) with a positive weight on \(y=\gamma\). Let \(x_k=(\xi_k,\gamma,\zeta)\). If \(\zeta=1\), the points \(x_j\) and \(x_k\) belonging to the inner plane \(y=\gamma\) becomes well-ordered and since \(x_j\) and \(x_k\) have opposite weights we get a contradiction by part \((b)\). If, \(\zeta<1\), the points, \(x_i\) and \(x_k\) becomes non well-ordered. Since both of \(x_i\) and \(x_k\) have positive weights, we get a contradiction by the part \((a)\).
    \\
    Since we get a contradiction in each case, the proof of part \((d)\) is completed.

\textbf{(e)} 
    Consider the point \(x_i=(\xi_i,\gamma_i,\zeta_i)\in p\) with minimal coordinates among the positive weighted points of \(p\). (Since the positive weighted points are well-ordered pairwise, the point with the minimal sum of coordinates has minimal coordinates.) 
    
    If \(x_i=(0,0,0)\), we are done. Otherwise, since \(x_i\) is chosen to be minimal, it cannot be any other vertex of the unit cube, and thus there is at least one inner plane passing through \(x_i\). 
    Without loss of generality, assume that this plane has equation \(x=\xi_i\). Then, on the plane \(x=\xi_i\) there is another point \(x_j=(\xi_i,\gamma_j,\zeta_j) \in p\) such that \(\lambda_j<0\). By part \((b)\) we know that \(x_i\) and \(x_j\) cannot be well-ordered, and hence, either \(\gamma_j<\gamma_i\) or \(\zeta_j<\zeta_i\) but in the first case on the plane \(y=\gamma_j\) and in the second case on the plane \(z=\zeta_j\) there is a positive weighted point to make the sum over that plane equal to zero. But this contradicts to the minimality of \(x_i\) and, hence, \(x_i=(0,0,0)\).

    Similarly, we can prove that the positive weighted point with maximal coordinates is \((1,1,1)\).

\begin{figure}[ht]
\centering
\begin{tikzpicture}[scale=0.3, line join=round, line cap=round]
\draw[->, red] (0,0,9) -- (12,0,9) node[below left] {$x$};
\draw[->, red] (0,0,9) -- (0,12,9) node[left] {$z$};
\draw[->, red] (0,0,9) -- (0,0,-5) node[left] {$y$};

\coordinate (T_4) at (9,8,0);
\coordinate (T_3) at (0,8,0);
\coordinate (T_2) at (9,0,0);
\coordinate (T_1) at (0,0,0);
\coordinate (T_5) at (9,8,9);
\coordinate (T_6) at (0,8,9);
\coordinate (T_7) at (9,0,9);
\coordinate (T_8) at (0,0,9);
\coordinate (T_x) at (4,4,4);
\coordinate (T_y) at (2,3,3);
\coordinate (T_z) at (7,6,7);

\coordinate (xip) at (7,7,11);
\coordinate (xjp) at (7,3,11);

\draw[thick, green!60!black] (T_1) -- (T_2);
\draw[thick, green!60!black] (T_1) -- (T_3);
\draw[thick, green!60!black] (T_3) -- (T_6);
\draw[thick, green!60!black] (T_6) -- (T_5);
\draw[thick, green!60!black] (T_5) -- (T_7);
\draw[thick, green!60!black] (T_7) -- (T_2);
\draw[red] (T_4) -- (T_3);
\draw[red] (T_4) -- (T_5);
\draw[red] (T_4) -- (T_2);

\filldraw[red] (T_1) circle (5pt) node[right] {};
\filldraw[red] (T_2) circle (5pt) node[right] {};
\filldraw[red] (T_3) circle (5pt) node[right] {};
\filldraw[black] (T_4) circle (5pt) node[right] {};
\filldraw[red] (T_5) circle (5pt) node[right] {};
\filldraw[red] (T_6) circle (5pt) node[right] {};
\filldraw[red] (T_7) circle (5pt) node[right] {};
\filldraw[black] (T_8) circle (5pt) node[right] {};
\filldraw[black] (T_x) circle (5pt) node[right] {};
\filldraw[blue] (9,0,4) circle (3pt) node[right] {};
\filldraw[blue] (4,0,0) circle (3pt) node[right] {};
\filldraw[blue] (0,8,4) circle (3pt) node[right] {};
\filldraw[blue] (0,4,0) circle (3pt) node[right] {};
\filldraw[blue] (4,8,9) circle (3pt) node[right] {};
\filldraw[blue] (9,4,9) circle (3pt) node[right] {};
\fill[black!30, opacity=0.111] (4,0,0)--(4,8,0)--(4,8,9)--(4,0,9);
\fill[black!30, opacity=0.211] (9,0,4)--(9,8,4)--(0,8,4)--(0,0,4);
\fill[black!30, opacity=0.11] (0,4,0)--(0,4,9)--(9,4,9)--(9,4,0);
\end{tikzpicture}
         \label{f:Lemma 2.2}
         \caption{Geometric interpretation: Theorem 2.2}
\end{figure}

\end{proof} 
From now on, \([S]^{(a)-(e)}\) will denote the set of projection cycle-vectors \((p,\lambda) \in [S]\), such that the propositions \((a)-(e)\) of the Theorem 2.2 holds for \((p,\lambda)\) and let \([S]^{(a)-(e)}_{min}\) denote the set of all minimal projection cycle-vectors \((p,\lambda) \in [S]^{(a)-(e)}\).

The following lemmas will later help us to prove our main results for functions satisfying the inequalities \eqref{e: delta class definition}, instead of the stricter version \eqref{e:strict-class-definition}.
\begin{lemma}
    The vector assigned to a minimal projection cycle is unique up to multiplication by a constant. Moreover, if it is orthonormal, then it has rational components.
    Thus, any minimal projection cycle uniquely (up to a sign) defines an orthonormal projection cycle-vector.
\end{lemma}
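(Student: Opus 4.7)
The plan is to cast the lemma as a linear-algebra statement about the solution space of a finite homogeneous system and then to invoke minimality to cut the dimension of that space down to one. Fix a minimal projection cycle $p=(x_1,\ldots,x_m)$ and write the defining relations \eqref{e:projection} as $M\lambda=0$, where the rows of $M$ are indexed by pairs $(j,\xi)$ with $j\in\{1,\ldots,n\}$ and $\xi\in\pi_j(p)$, and the $(j,\xi)$-entry in column $i$ equals $1$ if $\pi_j(x_i)=\xi$ and $0$ otherwise. Let $V=\ker M\subseteq\mathbb{R}^m$. Since $p$ is a projection cycle, $V$ contains at least one vector with all components nonzero, so the first claim of the lemma reduces to $\dim V=1$.

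To prove $\dim V=1$ I would argue by contradiction: suppose $\lambda,\mu\in V$ are linearly independent. If $\mu$ already has a zero component, I would restrict attention to the indices where $\mu$ is nonzero. The sub-tuple of $p$ consisting of those indices inherits the constraints \eqref{e:projection} with the nonzero entries of $\mu$ as weights; because these remaining entries are nonzero by construction, the sub-tuple is itself a projection cycle, contradicting the minimality of $p$. If instead every entry of $\mu$ is nonzero, I would replace $\mu$ by $\nu:=\mu_1\lambda-\lambda_1\mu\in V$; this vector is nonzero by the linear independence of $\lambda$ and $\mu$ yet satisfies $\nu_1=0$, so the previous argument applied to $\nu$ again produces a proper sub-projection-cycle and contradicts minimality. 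Hence $\dim V=1$, and all weight vectors assigned to $p$ are scalar multiples of a single vector $\lambda^{\ast}$.

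For the rationality and for the sign-uniqueness of the orthonormal representative, I would use that the entries of $M$ lie in $\{0,1\}\subset\mathbb{Q}$, so $V$ is the $\mathbb{R}$-span of a one-dimensional $\mathbb{Q}$-subspace $V_{\mathbb{Q}}\subseteq\mathbb{Q}^m$. Picking any nonzero rational generator and clearing denominators yields an integer generator $\lambda^{\ast}\in V_{\mathbb{Q}}\cap\mathbb{Z}^m$, and every real solution is a real scalar multiple of $\lambda^{\ast}$. Imposing $\sum_i|\lambda_i|=1$ forces the scalar to equal $\pm 1/\sum_i|\lambda^{\ast}_i|$, which is rational; hence exactly two orthonormal weight vectors arise and they differ only by an overall sign.

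The step I expect to be the most delicate, though short, is the conversion of a $V$-vector with some zero entries into a bona fide sub-projection-cycle: one has to observe that the restriction of \eqref{e:projection} to the support of that vector is literally the same system written for the corresponding sub-tuple of points, so the support really is a projection cycle in the strict sense of the first definition of the paper, not merely a weak projection cycle per Definition \ref{d:weak-projection-cycle}, and therefore does contradict minimality.
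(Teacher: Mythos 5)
Your proof is correct. Note, though, that the paper itself does not prove this lemma at all: its ``proof'' is a pointer to Lemma 4 and Lemma 5 of Navada and to Lemma 3.32 in Ismailov's monograph, so what you have produced is a self-contained substitute rather than a variant of an argument in the text. Your route --- encode the constraints \eqref{e:projection} as a homogeneous system $M\lambda=0$ with a $\{0,1\}$ incidence matrix, show $\dim\ker M=1$ by observing that any nonzero kernel vector with a zero entry would hand you a proper sub-projection-cycle supported on its nonzero entries (and manufacture such a vector via $\nu=\mu_1\lambda-\lambda_1\mu$ when two independent kernel vectors exist), then get rationality from the rationality of $M$ and one-dimensionality of the kernel, and sign-uniqueness from the normalization $\sum_i|\lambda_i|=1$ --- is sound in every step; the point you flag as delicate (that restricting \eqref{e:projection} to the support really yields a projection cycle in the strict sense, since the omitted terms contribute zero to every sum) is exactly the right thing to check and you check it correctly. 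It is also pleasantly consonant with what the paper does later in Section 3, where minimal projection cycles are identified with minimal solutions of a linear system and with minimal linearly dependent column systems (Lemmas 3.1--3.2 and Theorem 3.1); your kernel-dimension argument is essentially the linear-algebra content underlying those statements, so your proof buys self-containedness at no extra cost, whereas the paper's citation buys brevity and defers to the original source.
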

\begin{proof}
    (See: \cite[Lemma 4 and Lemma 5]{Navada} or \cite[Lemma 3.32]{Ismailov_2021}).
\end{proof}
\begin{lemma}
    For any projection cycle \((p,\lambda) \in [S]\) and for any function \(f\in C(\Omega)\), there exist a projection cycle \((p^0,\lambda^0) \in [S]_{min}\) such that \(p^0 \subset p\) and the following relations hold
    \begin{enumerate}
        \item \[\lambda^0(x)\lambda(x)>0, \ \forall x \in p^0, \]
        \item \[\frac{\sum_{x_j \in p^0}\lambda^0_jf(x_j)}{\sum|\lambda^0_j|} \ge \frac{\sum_{x_i \in p}\lambda_if(x_i)}{\sum|\lambda_i|}.\]
    \end{enumerate}
\end{lemma}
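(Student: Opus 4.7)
The plan is to decompose $(p,\lambda)$ as a finite positive combination
\[
(p,\lambda) \;=\; \sum_{k=1}^{N} c_k\,(q_k,\mu_k),
\]
where each $(q_k,\mu_k)\in X_{S,\min}$ with $q_k \subset p$, each $c_k > 0$, and the signs are compatible in the sense that $\mu_k(x)\lambda(x)\ge 0$ for every $k$ and every $x\in p$. Sign compatibility prevents cancellation in absolute values, so simultaneously
\[
\sum_i |\lambda_i| = \sum_k c_k \sum_{x\in q_k}|\mu_k(x)|,\qquad \sum_i \lambda_i f(x_i)=\sum_k c_k\sum_{x\in q_k}\mu_k(x)f(x).
\]
Writing $A_k=\sum_{x\in q_k}\mu_k(x)f(x)$ and $B_k=\sum_{x\in q_k}|\mu_k(x)|>0$, the ratio on the right of conclusion~(2) is then a weighted average of the ratios $A_k/B_k$ with positive weights $c_kB_k$, and the mediant inequality delivers an index $k_0$ with $A_{k_0}/B_{k_0}\ge \sum_i \lambda_i f(x_i)/\sum_i|\lambda_i|$. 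Taking $(p^0,\lambda^0):=(q_{k_0},\mu_{k_0})$ gives conclusion~(2), and conclusion~(1) is immediate from sign compatibility on $q_{k_0}$.

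\noindent\textbf{Building the decomposition.} I would argue by induction on the number of points of $p$. The base case is $(p,\lambda)$ minimal, where $N=1$ after orthonormal rescaling (permitted by \textbf{Lemma 2.3}). For the inductive step, granting existence of a sign-compatible minimal sub-cycle $(q,\mu)\in X_{S,\min}$ with $q\subset p$ and $\mu(x)\lambda(x)\ge 0$ on $q$, set
\[
t \;:=\; \min_{x\in q,\;\mu(x)\ne 0}\frac{\lambda(x)}{\mu(x)} \;>\; 0
\]
and $\lambda':=\lambda - t\mu$. The choice of $t$ ensures $\lambda'(x)\lambda(x)\ge 0$ for all $x\in p$ and zeros out at least one point of $q$, so the support of $\lambda'$ is strictly contained in $p$. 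Applying the induction hypothesis to the projection cycle carried by $\lambda'$ on its support, and prepending the term $t(q,\mu)$, produces the sign-compatible decomposition of $(p,\lambda)$.

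\noindent\textbf{Main obstacle.} The crux is producing a sign-compatible minimal sub-cycle at each inductive step. My preferred route is a convex-cone argument. Consider
\[
K \;:=\; \{\,\nu : \nu \text{ is a weak projection cycle on } p,\; \nu(x)\lambda(x)\ge 0\ \forall x\in p\,\},
\]
a polyhedral cone in the finite-dimensional space of weak projection cycles supported in $p$, containing $\lambda\ne 0$. The extreme rays of $K$ are exactly the positive multiples of orthonormal minimal projection cycle-vectors $(q,\mu)$ with $q\subset p$ and $\mu(x)\lambda(x)\ge 0$ on $q$: indeed, if the support of an extreme $\nu\in K$ properly contained a projection cycle $q'$, \textbf{Lemma 2.3} would supply a unique (up to sign) orthonormal weight vector $\mu'$ on $q'$, and for sufficiently small $\varepsilon>0$ both $\nu\pm\varepsilon\mu'$ would lie in $K$, contradicting extremality. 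Carath\'eodory's theorem then writes $\lambda$ as a positive combination of such extreme rays, which is the desired decomposition and, in particular, settles existence of the sign-compatible minimal sub-cycle. A purely combinatorial alternative would construct $(q,\mu)$ step by step, walking from a positive-weighted point of $p$ to an opposite-signed point on a shared coordinate plane (guaranteed by \textbf{Remark 1.1}) until the walk closes, but the three-dimensional case analysis is less transparent than the cone argument.
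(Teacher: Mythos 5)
Your proposal is correct, but it is worth noting that the paper does not prove this lemma at all: its ``proof'' is a citation of Navada's Theorem~2, so you have supplied a self-contained argument where the authors outsource one. Your route --- view the sign-compatible weak projection cycle-vectors supported in \(p\) as a polyhedral cone \(K\), check that its extreme rays are (multiples of) minimal projection cycle-vectors whose signs agree with \(\lambda\), decompose \(\lambda\) conformally as a positive combination of these, and finish with the mediant inequality --- is exactly the classical conformal (circuit) decomposition argument, and it is sound. The two points you treat briefly do hold and deserve a sentence each if you write this up: \(K\) is pointed because every component of \(\lambda\) on \(p\) is nonzero (so \(\nu,-\nu\in K\) forces \(\nu=0\)), which is what licenses generation by extreme rays; and the strict inequality \(\lambda^0(x)\lambda(x)>0\) in conclusion (1) follows from sign-compatibility \emph{plus} the fact that \(\lambda\) never vanishes on \(p\) and \(\lambda^0\) never vanishes on \(p^0\). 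In the extremality step your perturbation \(\nu\pm\varepsilon\mu'\) stays in \(K\) precisely because \(\mu'\) is supported inside the support of \(\nu\), which you use correctly; also, you only need existence (not Lemma~2.3 uniqueness) of a weight vector on the smaller cycle \(q'\). Compared with citing Navada, your argument makes the paper self-contained and makes transparent why no cancellation occurs in \(\sum|\lambda_i|\); the cost is a small amount of polyhedral machinery (Minkowski generation of a pointed cone), while the inductive extraction you sketch as an alternative would avoid even that at the price of more bookkeeping.
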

\begin{proof}
    See \cite[Theorem 2]{Navada}.
\end{proof}
Lemma 2.4 implies that, for any function \(f\),
\[
\max_{(p,\lambda)\in [S]} \frac{\sum_{}\lambda_if(x_i)}{\sum|\lambda_i|}=\max_{(p,\lambda)\in [S]_{min}} \frac{\sum_{}\lambda_if(x_i)}{\sum|\lambda_i|}
\]
and
\[
\max_{(p,\lambda)\in [S]^{(a)-(e)}} \frac{\sum_{}\lambda_if(x_i)}{\sum|\lambda_i|}=\max_{(p,\lambda)\in [S]^{(a)-(e)}_{min}} \frac{\sum_{}\lambda_if(x_i)}{\sum|\lambda_i|}.
\]
\begin{lemma} 
    For any set \(S=S_x\times S_y\times S_z\), where \(S_x,S_y,S_z \subset [0,1]\) are some finite sets with \(\{0,1\}\subset S_x \cap S_y \cap S_z\) and for any function \(f \in C(\Omega)\) satisfying the inequalities \eqref{e: delta class definition}, the intersection \([S]^{(a)-(e)} \cap [S]_f\) is non-empty. 
\end{lemma}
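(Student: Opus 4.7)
The plan is to reduce to Theorem 2.2 by approximating $f$ uniformly with functions in the strict class \eqref{e:strict-class-definition}. Set
\[
f_n(x,y,z) = f(x,y,z) + \tfrac{1}{n}\bigl(xy+yz+xz\bigr).
\]
A direct computation gives $\Delta_x\Delta_y(xy+yz+xz)=xy$, $\Delta_x\Delta_z(xy+yz+xz)=xz$, and $\Delta_y\Delta_z(xy+yz+xz)=yz$, so each mixed second difference of the perturbation is strictly positive on positive increments. Hence $f_n$ satisfies \eqref{e:strict-class-definition}, while $f_n\to f$ uniformly on $\Omega$.

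Since $S$ is finite, the maximum in \eqref{e: maximum-over-a-finite-set} is attained, so $X_{S,f_n}\neq\emptyset$ for every $n$. Applying Lemma 2.4 to any element of $X_{S,f_n}$ yields a minimal optimal projection cycle-vector; because Theorem 2.2 is available for $f_n$, that minimal optimal vector automatically lies in $X'_{S,min}$. Thus for each $n$ we obtain an orthonormal $(p_n,\lambda_n)\in X'_{S,min}\cap X_{S,f_n}$. By Lemma 2.3 each minimal projection cycle contained in $S$ determines an orthonormal vector uniquely up to sign, and since $S$ is finite the set $X'_{S,min}$ is finite (the sign being fixed for each optimal vector by \eqref{e:orthonormal sum}). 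Pigeonhole then gives a single $(p^*,\lambda^*)\in X'_{S,min}$ with $(p_n,\lambda_n)=(p^*,\lambda^*)$ along some infinite subsequence $(n_k)$.

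Along that subsequence, for every $(p,\lambda)\in X_{S,min}$,
\[
\frac{\sum \lambda^*(x)\, f_{n_k}(x)}{\sum |\lambda^*(x)|} \;\ge\; \frac{\sum \lambda(x)\, f_{n_k}(x)}{\sum |\lambda(x)|}.
\]
Both sides are fixed finite linear combinations of values of $f_{n_k}$, so $f_{n_k}\to f$ uniformly allows passage to the limit, preserving the inequality with $f$ in place of $f_{n_k}$. Hence $(p^*,\lambda^*)$ maximizes the Golomb ratio over $X_{S,min}$ for $f$; combined with the identity $\max_{X_S}=\max_{X_{S,min}}$ recorded right after Lemma 2.4, this forces $(p^*,\lambda^*)\in X_{S,f}$. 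Since also $(p^*,\lambda^*)\in X'_S$, the intersection $X'_S\cap X_{S,f}$ is non-empty.

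The one point that needs care is choosing the perturbation so as to enter \eqref{e:strict-class-definition} without disturbing the structural conclusions of Theorem 2.2. The symmetric bilinear $xy+yz+xz$ is tailor-made, because each of its three mixed second differences reduces to a single positive monomial; once this is in place, the remaining argument is forced by the finiteness of $X'_{S,min}$ and the continuous dependence of each Golomb ratio on $f$.
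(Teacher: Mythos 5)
Your proposal is correct and follows essentially the same route as the paper: the same perturbation $f_n = f + \tfrac{1}{n}(xy+xz+yz)$ to enter the strict class, combined with Theorem 2.2, Lemma 2.4, and the finiteness of the (normalized) minimal projection cycle-vectors from Lemma 2.3. The only cosmetic difference is that you extract a fixed optimal cycle-vector by pigeonhole along a subsequence and pass to the limit in the inequalities, whereas the paper interchanges the limit with the maximum over the finite set; these are equivalent ways of performing the same limiting step.
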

\begin{proof}
Let's define 
\[
f_n(x,y,z)=f(x,y,z)+\frac{1}{n}(xy+xz+yz), \ \forall n \in \mathbb{N}.
\]
Clearly, the function \(f_n\) satisfies the inequalities \eqref{e:strict-class-definition} and therefore it follows from Theorem 2.2 and Lemma 2.4 that,
\[
 \max_{(p,\lambda)\in [S]} \frac{\sum_{}\lambda_if(x_i)}{\sum|\lambda_i|} =\max_{(p,\lambda)\in [S]_{min}} \frac{\sum_{}\lambda_if(x_i)}{\sum|\lambda_i|} = \max_{(p,\lambda)\in [S]_{min}} \frac{\sum_{}\lambda_i(\lim_{n \rightarrow  \infty}f_{n}(x_i))}{\sum|\lambda_i|}.
\]
Here, we can change the order of the limit and the maximum, as \([S]_{min}\) is a finite set by Lemma 2.3 (considering vectors up to multiplication by a constant), and hence,
\[
\max_{(p,\lambda)\in [S]_{min}} \frac{\sum_{}\lambda_i(\lim_{n \rightarrow  \infty}f_{n}(x_i))}{\sum|\lambda_i|}=\lim_{n \rightarrow \infty}\max_{(p,\lambda)\in [S]_{min}} \frac{\sum_{}\lambda_if_{n}(x_i)}{\sum|\lambda_i|}=\lim_{n \rightarrow  \infty}\max_{(p,\lambda) \in [S]}\frac{\sum_{}\lambda_if_{n}(x_i)}{\sum|\lambda_i|}.
\]

By the Theorem 2.2 and Lemma 2.4,
\[
\lim_{n \rightarrow  \infty}\max_{(p,\lambda) \in [S]}\frac{\sum_{}\lambda_if_{n}(x_i)}{\sum|\lambda_i|}=\lim_{n \to  \infty}\max_{(p,\lambda) \in [S]^{(a)-(e)}}\frac{\sum_{}\lambda_if_{n}(x_i)}{\sum|\lambda_i|}=\lim_{n \to  \infty}\max_{(p,\lambda) \in [S]^{(a)-(e)}_{min}}\frac{\sum_{}\lambda_if_{n}(x_i)}{\sum|\lambda_i|}.
\]
Changing the order of the limit and maximum once more and using the Lemma 2.3  gives the desired result:
\[
\lim_{n \rightarrow  \infty}\max_{(p,\lambda) \in [S]^{(a)-(e)}_{min}}\frac{\sum_{}\lambda_if_{n}(x_i)}{\sum|\lambda_i|}=\max_{(p,\lambda) \in [S]^{(a)-(e)}_{min}}\frac{\sum_{}\lambda_i(\lim_{n \rightarrow  \infty}f_{n}(x_i))}{\sum|\lambda_i|}=\max_{(p,\lambda) \in [S]^{(a)-(e)}}\frac{\sum_{}\lambda_i(f(x_i))}{\sum|\lambda_i|}.
\]
Now let's choose \((p^0,\lambda^0) \in [S]^{(a)-(e)}\) such that,
\[
\frac{\sum_{}\lambda^0_i(f(x_i))}{\sum|\lambda^0_i|}=\max_{(p,\lambda) \in [S]^{(a)-(e)}}\frac{\sum_{}\lambda_i(f(x_i))}{\sum|\lambda_i|}.
\]
Then, 
\[
\frac{\sum_{}\lambda^0_i(f(x_i))}{\sum|\lambda^0_i|}=\max_{(p,\lambda) \in [S]}\frac{\sum_{}\lambda_i(f(x_i))}{\sum|\lambda_i|},
\]
and thus, \((p^0,\lambda^0) \in [S]_f\) by the definition of \([S]_f\). Hence, \((p^0,\lambda^0) \in [S]^{(a)-(e)} \cap [S]_f\).
\end{proof}

\subsection{Construction of a finite set containing an extremal projection cycle-vector}
\vspace{15pt}

Above, we study some structural properties of optimal projection cycles on certain finite sets. Below, we will use these properties to prove that there is a projection cycle on an explicitly given finite set that gives the supremum \eqref{e:Golomb-applied}. 

Let \(f:\Omega\rightarrow \mathbb{R}\) be a given continuous function of three variables satisfying conditions (2.1). 

We start by defining the desired set. Let's name the corners of the unit cube from \(T_1\) to \(T_8\) as 
\begin{align*}
    T_1=(0,0,0), \ T_2=(0,0,1), \ T_3=(1,0,1), \ T_4=(1,0,0),\\
    T_5=(1,1,0), \ T_6=(0,1,0), \ T_7=(0,1,1), \ T_8=(1,1,1).
\end{align*}
Define the functions,
\begin{align*}
    g_1(x,y,z)=f(x,y,z)-f(x,0,1)-f(1,y,0)-f(0,1,z), \\
    g_2(x,y,z)=f(x,y,z)-f(1,0,z)-f(x,1,0)-f(0,y,1), \\
    g_3(x,y,z)=f(x,y,z)-f(x,0,1)-f(1,0,z)-f(1,y,0), \\
    g_4(x,y,z)=f(x,y,z)-f(1,0,z)-f(1,y,0)-f(x,1,0), \\
    g_5(x,y,z)=f(x,y,z)-f(1,y,0)-f(x,1,0)-f(0,1,z), \\
    g_6(x,y,z)=f(x,y,z)-f(x,1,0)-f(0,1,z)-f(0,y,1), \\
    g_7(x,y,z)=f(x,y,z)-f(0,1,z)-f(0,y,1)-f(x,0,1), \\
    g_8(x,y,z)=f(x,y,z)-f(0,y,1)-f(x,0,1)-f(1,0,z)
\end{align*}
on the unit cube and
\begin{align*}
    h_1(x,y,0)=f(x,y,0)-f(0,y,1)-f(x,0,1),\\
    h_2(x,y,1)=f(x,y,1)-f(1,y,0)-f(x,1,0),\\
    h_3(x,0,z)=f(x,0,z)-f(0,1,z)-f(x,1,0),\\
    h_4(x,1,z)=f(x,1,z)-f(1,0,z)-f(x,0,1),\\
    h_5(0,y,z)=f(0,y,z)-f(1,0,z)-f(1,y,0),\\
    h_6(1,y,z)=f(1,y,z)-f(0,1,z)-f(0,y,1)
\end{align*}
on the faces of the unit cube lying on the planes \(z=0,z=1,y=0,y=1,x=0\) and \(x=1\) correspondingly.

Let \(M_i,1\le i \le 8\) be a maximum point of the function \(g_i\) on the domain \(\Omega=[0,1]^3\).
And \(F_i,1\le i \le 6\) be a maximum point of the function \(h_i\) on the face it is defined.

Let \[U=U(f)=\{T_i, 1 \le i \le 8\} \cup\{M_i, 1 \le i \le 8\} \cup \{F_i, 1 \le i \le 6\},\]
and let \(U_x,U_y,U_z\) be the set of projections of elements of \(U\) to the coordinate lines \(Ox,Oy,Oz\) respectively. 
We will see that the choice of \(M_i\) and \(F_i\) among the maximum points of \(g_i\) and \(h_i\) will not change the correctness of the following theorem.

\begin{remark}
    Some of the points $T_i, \ 1 \le i \le 8$, $M_j, \ 1 \le j \le 8$, $F_k,\ 1 \le k \le 6$ may coincide. In this case, the set $U$ will narrow.
\end{remark}

\begin{theorem} \label{t:Theorem 1}
    Let \(f \in C(\Omega) \) be a function satisfying the inequalities \eqref{e: delta class definition} and  \(S =S_x \times S_y \times S_z \subset \Omega \) be any arbitrary finite set with \(U \subset S\), then we have:
    \begin{equation} \label{e:maximum-clarified}
        \max_{(p,\lambda)\in [S] }\frac{\sum_{}\lambda_if(x_i)}{\sum|\lambda_i|} =\max_{(p,\lambda) \in [U_x\times U_y \times U_z]}\frac{\sum_{} \lambda_if(x_i)}{\sum|\lambda_i|}.
    \end{equation}
\end{theorem}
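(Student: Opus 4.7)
Since $U\subseteq S$ implies $U_x\times U_y\times U_z\subseteq S$, the inequality ``$\ge$'' in \eqref{e:maximum-clarified} is immediate. I focus on the reverse direction: every optimal projection cycle-vector on $S$ must be dominated in ratio by one supported in $U_x\times U_y\times U_z$.

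First I would invoke Lemma 2.4 (reduction to minimal cycles) and Lemma 2.5 (existence of an optimal cycle in $X'_S\cap X_{S,f}$) to pick a minimal optimal cycle $(p,\lambda)\in X'_S\cap X_{S,f}$, so that the five structural properties (a)--(e) of Theorem 2.2 are simultaneously available. These force the positive-weight points of $p$ to form a well-ordered chain from $T_1=(0,0,0)$ to $T_8=(1,1,1)$ while confining every negative-weight point to one of the six edges $l_1,\ldots,l_6$, with no non-corner positive point on any edge.

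The core of the argument is to attach to every positive non-corner point $y^*\in p$ an explicit elementary six-point projection cycle whose ratio is expressible through one of the eight functions $g_j$ or six functions $h_k$. When $y^*$ is interior, property (c) combined with the three interior-plane balance equations forces the three balancing negative points to lie on exactly one edge from each of the pairs $\{l_1,l_4\}$, $\{l_3,l_6\}$, $\{l_2,l_5\}$; the eight resulting configurations correspond bijectively to $g_1,\ldots,g_8$, and a direct verification shows that for the $j$-th configuration the six-point cycle $\{y^*,A_j,B_j,C_j,T_1,T_8\}$ with weights $(+1,-1,-1,-1,+1,+1)$ is a minimal projection cycle with ratio $\bigl(g_j(y^*)+f(T_1)+f(T_8)\bigr)/6$. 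Replacing $y^*$ by $M_j$ yields an elementary cycle with a larger or equal ratio whose points all lie in $U_x\times U_y\times U_z$. The case when $y^*$ lies on a face is handled analogously, with $h_k$ and $F_k$ playing the roles of $g_j$ and $M_j$, producing an elementary cycle of controlled ratio whose points again belong to $U_x\times U_y\times U_z$ (the multiplicities on the corner weights absorb the missing coordinate direction of $y^*$).

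The final step is to decompose $(p,\lambda)$ as a nonnegative combination of these elementary cycles. Because the addition of projection cycle-vectors from \eqref{sum of two projection-cycle-vectors} is additive on the numerator $\sum\lambda_i f(x_i)$ and sub-additive on the denominator $\sum|\lambda_i|$ (with equality when no weight cancellation occurs), the ratio of $(p,\lambda)$ becomes a convex combination of the elementary ratios and is thus bounded above by the largest one, which is itself realized on $U_x\times U_y\times U_z$. The principal anticipated obstacle is the decomposition step: when distinct positive non-corner points of $p$ share a coordinate, their elementary cycles couple through the common negative edge balance points, and one must argue --- most likely by induction on the number of positive non-corner points, together with careful bookkeeping of the coupled weights --- that an additive decomposition with nonnegative coefficients always exists without introducing spurious sign cancellations.
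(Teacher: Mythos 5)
Your high-level plan differs from the paper's (the paper argues by contradiction, taking a counterexample cycle in $X''_S$ with the fewest positive-weight points outside $U$ and shifting the weight of one offending point to $M_j$ or $F_k$ via an added correction cycle, with Lemma 2.2 controlling $\sum|\lambda_i|$ and the maximality of $g_j$, $h_k$ controlling the numerator), and as written your route has two genuine gaps. First, your elementary cycles are mis-specified: the six-point set $\{y^*,A_j,B_j,C_j,T_1,T_8\}$ with weights $(+1,-1,-1,-1,+1,+1)$ is a projection cycle only for the configurations $G_1$ and $G_2$. For $G_3(x)=\{(\xi,0,1),(1,0,\zeta),(1,\gamma,0)\}$, say, two of the negative points lie on the face $x=1$, so that face carries total weight $-2+1\neq 0$; the correct cycles containing one $M_j$ with $3\le j\le 8$ need two additional corners with negative weight (compare items 2.3--2.20 in Section 3), and then the ratio is not $\bigl(g_j(y^*)+f(T_1)+f(T_8)\bigr)/6$. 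So the claimed ``direct verification'' and the bijection with $g_1,\dots,g_8$ fail for six of the eight cases.

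Second, the decomposition step that you yourself flag is not mere bookkeeping; it is where the approach breaks. Having passed to a \emph{minimal} optimal cycle via Lemma 2.4, minimality forbids writing $(p,\lambda)$ as a combination of projection cycles supported on proper subsets of $p$, yet minimal cycles with two or three positive non-corner points genuinely occur (the catalogue in Section 3, items 3.x, 4.x, 7.x, 9.x, exhibits exactly such shapes); so either your elementary cycles use only points of $p$ and minimality rules out any nontrivial decomposition, or they use extra corners not in $p$ and then they do not decompose $(p,\lambda)$ at all. Moreover, if cancellation is allowed, the subadditivity $\sum|\lambda_i|\le\sum_k c_k\sum|\lambda^k_i|$ bounds the ratio of $(p,\lambda)$ from \emph{below} by the mediant, not from above, so the convex-combination bound requires an exact sign-compatible decomposition that you have not produced and that, for minimal cycles, cannot exist nontrivially. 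The paper avoids all of this by never decomposing: it perturbs the counterexample cycle one positive point at a time (adding a correction cycle with $\varepsilon$ equal to that point's weight, chosen so that signs oppose at the cancelled points), which needs only Lemma 2.2 and the definition of $M_j$, $F_k$, and terminates by the extremal choice of the counterexample. To repair your argument you would essentially have to replace the decomposition by this kind of one-point-at-a-time exchange, i.e.\ fall back on the paper's mechanism.
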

\begin{proof}
Let, \([S]^{(a)-(d)}\) be the set of projection cycle-vectors \((p,\lambda) \in [S]\) that satisfies propositions \((a),(b),(c),(d)\) of the Theorem 2.2.

It is clear that \([S]^{(a)-(e)} \subset [S]^{(a)-(d)}\), and hence from Lemma 2.5 we have
\[
\max_{(p,\lambda)\in [S] }\frac{\sum_{}\lambda_if(x_i)}{\sum|\lambda_i|}=\max_{(p,\lambda)\in [S]^{(a)-(d)} }\frac{\sum_{}\lambda_if(x_i)}{\sum|\lambda_i|}.
\]
Thus, it is enough to prove that
\[
\max_{(p,\lambda)\in [S]^{(a)-(d)} }\frac{\sum_{}\lambda_if(x_i)}{\sum|\lambda_i|}=\max_{(p,\lambda) \in [U_x\times U_y \times U_z]}\frac{\sum_{} \lambda_if(x_i)}{\sum|\lambda_i|}.
\]
Since \(U \subset S\) and \(S=S_x \times S_y \times S_z\) we have \(U_x \subset S_x\), \(U_y \subset S_y\) and \(U_z \subset S_z\) which imply \(U_x\times U_y \times U_z \subset S\) and therefore we can write the inequality
\[
\max_{(p,\lambda)\in [S]^{(a)-(d)} }\frac{\sum_{}\lambda_if(x_i)}{\sum|\lambda_i|} \ge \max_{(p,\lambda) \in [U_x\times U_y \times U_z]}\frac{\sum_{} \lambda_if(x_i)}{\sum|\lambda_i|}.
\]
Assume the contrary to the theorem, let there exists a projection cycle \((p^0,\lambda^0) \in [S]^{(a)-(d)}\) such that
\begin{equation} \label{e: contrary assumption}
    \frac{\sum_{}\lambda^0_if(x^0_i)}{\sum|\lambda^0_i|} > \max_{(p,\lambda) \in [U_x\times U_y \times U_z]}\frac{\sum_{} \lambda_if(x_i)}{\sum|\lambda_i|}. 
\end{equation}
 
Clearly, \((p^0,\lambda^0) \not \in [U_x\times U_y \times U_z]\). Moreover we can choose it to be optimal in the set \(S\) for the  function \(f\), that is, \((p^0,\lambda^0) \in [S]_f\).

If all points in \(p^0\) with positive weight belong to \(U\), then the set of their projections on the edges of the unit cube (which are all the possible locations of points of \(p^0\) with negative weights)  would belong to the set \(U_x \times U_y \times U_z\), and hence we would have \(p^0 \subset U_x \times U_y \times U_z\).

Thus, there should exists at least a point \(x \in p^0\) with positive weight(\(\lambda^0(x)>0\)) such that, \(x \not \in U\). We will move weights from each such point to a point of the set \(U\) by help of the Lemma 2.2. By Theorem 2.2, \((d)\), we know that \(x\) cannot be on the edges of the unit cube. 

We define our moves based on the location of \(x\) in the unit cube. Thus, we have the following cases:
\item[\textbf{Case 1:}] \(x\) is an interior point of one of the faces of the unit cube \(\Omega\).

Without loss of generality, we can assume that \(x\) belongs to the face lying on the \(xOy\) plane. Then we can write \(x=(\xi,\gamma,0)\) for some \(\xi,\gamma \in (0,1)\). 
    
Consider the planes \(x=\xi\) and \(y=\gamma \). In each of these planes, there should be at least one point \((p^0,\lambda^0)\) with negative weight. By Theorem 2.2, part \((c)\), these negative weighted points could only be on the edges, not adjacent to any of the vertices \((0,0,0)\) and \((1,1,1)\). The intersections of the planes \(x=\xi\) and \(y=\gamma \) and the edges are the points \((\xi,1,0),(\xi,0,1)\) and \((1,\gamma,0),(0,\gamma,1)\), respectively. 

Since both \((\xi,1,0)\) and \((1,\gamma,0)\) are well-ordered with \(x\) and lie on an interior plane, by Theorem 2.2, part \((b)\), none of \((\xi,1,0)\) and \((1,\gamma,0)\) can have a negative weight. Thus, both \((\xi,0,1)\) and \((0,\gamma,1)\) have negative weights. Moreover, since no other negative weighted points can exist on the planes \(x=\xi\) and \(y=\gamma \), we have:
\begin{equation*} \label{e:inequalities for weights}
|\lambda^0(\xi,0,1)|\ge \lambda^0(x)\text{ and } |\lambda^0(0,\gamma,1)| \ge \lambda^0(x)
\end{equation*}
and thus \(\min(|\lambda^0(\xi,0,1)|,|\lambda^0(0,\gamma,1)|,|\lambda^0(x)|)=\lambda^0(x)\).
     
Let's move the weights on the points \(x\),\((\xi,0,1)\) and \((0,\gamma,1)\) to the points \(F_1=(\xi_1,\gamma_1,0)\) and its two corresponding projections, namely \((\xi_1,0,1)\) and \((0,\gamma_1,1)\). Or, in other words, we define \[p'=(x,(\xi,0,1)\,(0,\gamma,1),(\xi_1,\gamma_1,0),(\xi_1,0,1),(0,\gamma_1,1))\]
and vector 
    \[ \lambda_{\varepsilon}=(-\varepsilon,\varepsilon,\varepsilon,\varepsilon,-\varepsilon,-\varepsilon), \]
    where \(\varepsilon=\lambda^0(x)\).
    Let \((p^1,\lambda^1)=(p^0,\lambda^0)+(p',\lambda_{\varepsilon})\). Since, at each of the first three points of \(p'\), the weights defined in \((p',\lambda_{\varepsilon})\) and \((p^0,\lambda^0)\) have opposite signs and
    \[ \varepsilon=\min(|\lambda^0(\xi,0,1)|,|\lambda^0(0,\gamma,1)|,|\lambda^0(x))|, \]
    by Lemma 2.2 we have
    \[
    \sum|\lambda^1_i| \le \sum|\lambda^0_i|.
    \]
On the other hand, 
\[
    \sum\lambda^1_if(x_i)-\sum\lambda^0_if(x_i)=
\]
\[
    =f(\xi_1,\gamma_1,0)-f(\xi_1,1,0)-f(1,\gamma_1,0)-f(\xi,\gamma,0)+f(\xi,1,0)+f(1,\gamma,0)=h(\xi_1,\gamma_1,0)-h(\xi,\gamma,0) \ge 0,
\]
where the inequality follows from the choice of \(F_1=(\xi_1,\gamma_1,0)\) as the maximum of the function \(h_1\).
Thus, we get
\[
    \frac{\sum\lambda^1_if(x_i)}{\sum|\lambda^1_i|} \ge \frac{\sum\lambda^0_if(x_i)}{\sum|\lambda^0_i|} >\max_{(p,\lambda) \in [U_x\times U_y \times U_z]}\frac{\sum_{} \lambda_if(x_i)}{\sum|\lambda_i|}.
\]
This proves that, \((p^1,\lambda^1) \in [S]_{f}\).
This makes the weight at point \(x\) equal to zero in the new projection cycle. A similar movement of the weights can be constructed if \(x\) lies on one of the other faces too.\\
\textbf{Case 2:} \(x\) is an interior point of the unit cube \(\Omega\).
    
Let \(x=(\xi, \gamma, \zeta)\). Consider the planes \(x=\xi\), \(y=\gamma\), \(z=\zeta\). Their intersections with the edges of the unit cube that are not adjacent to the vertices not adjacent to any of the vertices \((0,0,0)\) and \((1,1,1)\) will be the points \((\xi,1,0),(\xi,0,1)\) and \((1, \gamma, 0),(0, \gamma, 1)\) and \((1, 0, \zeta),(0, 1, \zeta)\), respectively. To make the sum over the corresponding plane zero, at least one (maybe both) of two points in each plane should belong to \((p^0,\lambda^0)\) with a negative weight.
    
Hence, all points of at least one of the eight sets 
\begin{align*}
    G_1(x)=\{(\xi,0,1),(1,\gamma,0),(0,1,\zeta)\}, \\
    G_2(x)=\{(1,0,\zeta),(\xi,1,0),(0,\gamma,1)\}, \\
    G_3(x)=\{(\xi,0,1),(1,0,\zeta),(1,\gamma,0)\}, \\
    G_4(x)=\{(1,0,\zeta),(1,\gamma,0),(\xi,1,0)\}, \\
    G_5(x)=\{(1,\gamma,0),(\xi,1,0),(0,1,\zeta)\}, \\
    G_6(x)=\{(\xi,1,0),(0,1,\zeta),(0,\gamma,1)\}, \\
    G_7(x)=\{(0,1,\zeta),(0,\gamma,1),(\xi,0,1)\},\\
    G_8(x)=\{(0,\gamma,1),(\xi,0,1),(1,0,\zeta)\}
\end{align*}
belong to \((p^0,\lambda^0)\) and have negative weights.
    
For each \(G_i\) we proceed by moving the weights at point \(x\) and elements of \(G_i\) to \(M_i\) and its corresponding projections on the edges. For example, if all elements of \(G_1\) belong to \(p^0\), then we construct
\(p'=(x,(\xi,0,1),(1,\gamma,0),(0,1,\zeta),(\xi_1,\gamma_1,\zeta_1),(\xi_1,0,1),(1,\gamma_1,0),(0,1,\zeta_1))\)
and vector
\(
\lambda_\varepsilon=(-\varepsilon,\varepsilon,\varepsilon,\varepsilon,\varepsilon,-\varepsilon,-\varepsilon,-\varepsilon)
\)
where \(M_1=(\xi_1,\gamma_1,\zeta_1)\) is the maximum point of \(g_1\) defined above and \(\varepsilon>0\). \\ As in \textbf{Case 1}, let \((p^1,\lambda^1)=(p^0,\lambda^0)+(p',\lambda_\varepsilon).\) Since, at each of the first four points of \((p',\lambda_\varepsilon)\), the weights defined in \((p',\lambda_\varepsilon)\) and \((p^0,\lambda^0)\) have opposite signs, by Lemma 2.2 for \[\varepsilon=\min(|\lambda(x)|,|\lambda((\xi,0,1))|,|\lambda((1,\gamma,0))|,|\lambda((0,1,\zeta))|\] we have 
\[
\sum|\lambda_i^1| \le \sum |\lambda_j^0|.
\]
Moreover, \[
\sum \lambda^1_if(x^1_i)-\lambda^0_if(x^0_i)=g_1(M_1)-g_1(x) \ge 0.\]\\
Thus, we have
\[
\frac{\sum\lambda^1_if(x_i)}{\sum|\lambda^1_i|} \ge \frac{\sum\lambda^0_if(x_i)}{\sum|\lambda^0_i|}.
\]
Again, this proves that, \((p^1,\lambda^1) \in [S]_{f}\).
On the other hand, choice of \(\varepsilon\) makes one of the weights of the points \(x,(\xi,0,1),(1,\gamma,0),(0,1,\zeta)\) to have zero weight in \((p^1,\lambda^1)\). Hence, after a finite number of copies of the above change for some sets \(G_i\), the weight in \(x\)  will be equal to zero in the resulting projection cycle-vector. 

Thus, we can proceed a finite number of described moves and obtain a projection cycle-vector from \([U_x \times U_y \times U_z]\), which is optimal in \(S\).
\end{proof}

The proof implies that the choice of \(M_i\) and \(F_i\) among the maximum points of \(g_i\) and \(h_i\) does not affect the correctness of the theorem. Now, we are ready to state our main result.
\begin{theorem}
    Let \(f \in C(\Omega)\) be a function satisfying the relations \eqref{e: delta class definition}. Then
    \begin{equation}
        E(f,\Omega)=\max_{(p,\lambda) \in [U_x\times U_y \times U_z]^{(a)-(e)}_{min}}\frac{\sum_{} \lambda_if(x_i)}{\sum|\lambda_i|},
    \end{equation}
    where \(U_x, U_y, U_z\) are the projections of \(U\) to the coordinate lines \(Ox,Oy,Oz\) respectively.
\end{theorem}
\begin{proof}
Applying Theorem 2.2 to the relation \eqref{e: supremum-maximum} we get
\begin{equation}
\begin{split}\label{e: main result}
    E(f,\Omega)=\sup_{A,B,C}  \left({\max_{(p,\lambda)\in [A\times B\times C]} \frac{\sum_{}\lambda_if(x_i)}{\sum|\lambda_i|}}\right)=\\=\sup_{ A,B,C}  \left({\max_{(p,\lambda)\in [(U_x \cup A)\times (U_y \cup B) \times (U_z \cup C)]} \frac{\sum_{}\lambda_if(x_i)}{\sum|\lambda_i|}}\right)=\\
   =\sup_{A,B,C}  \left(\max_{(p,\lambda) \in [U_x\times U_y \times U_z]}\frac{\sum_{} \lambda_if(x_i)}{\sum|\lambda_i|}\right)=
   \max_{(p,\lambda) \in [U_x\times U_y \times U_z]}\frac{\sum_{} \lambda_if(x_i)}{\sum|\lambda_i|}.
\end{split}
\end{equation} 
Hence, we are finished by the Lemma 2.4 and Lemma 2.5.
\end{proof}
\begin{remark}
    The above-mentioned papers \cite{Diliberto_Straus},\cite{Flatto}, \cite{Ismailov_2021} give similar theorems. Their set corresponding to \(U\) in our case, was the corners of the unit square in papers  \cite{Diliberto_Straus} and \cite{Flatto}, and points from a predefined grid in \cite{Ismailov_2021}. While all these are independent of \(f\), \(U\) depends on the function \(f\) in the above theorem.  
\end{remark}
\begin{remark}
    The Theorem 2.3 shows that there exists a minimal projection cycle-vector \((p,\lambda) \in [U_x\times U_y \times U_z]\) such that \((p,\lambda)\) satisfies all the propositions of Theorem 2.2 and 
    \[
    E(f,\Omega)=\frac{\sum_{x_i \in p} \lambda(x_i)f(x_i)}{\sum_{x_i \in p}|\lambda(x_i)|}.
    \]
    In the Appendix, we have shown that there are at most 123 such projection cycle-vectors in the set  \(U\). Moreover, any of them can be reconstructed only by knowing the weights in the vertices and all the positive weighted points(if two of those points coincide, then thinking them as separate and adding their weights after the weight calculations are done) by using the definitions of $M_i$ and $F_i$. For example, if $M_i$ has weight $\lambda$, we will add $-\lambda$ to all of the points on $G_i$. 
\end{remark}

\section{Minimal Projection Cycles on a Finite Set}
\vspace{15pt}

Let \(p=(x_1,x_2,...,x_k)\) be an ordered finite subset of \(\Omega\) with \(k\) points. In order to find all vectors \(\lambda=(\lambda_1,\lambda_2,\dots, \lambda_k)\), such that \((p,\lambda)\) is a projection cycle-vector, we have to solve the system of linear equations \eqref{e:projection}. Let this system be represented as 
\begin{equation} \label{e: matrix form}
    A\lambda=0
\end{equation}
for a matrix \(A\).
Below, we will prove some necessary conditions for an ordered subset \(p'\) of \(p\) to be a minimal projection cycle.

We start with a well-known algebraic definition and a lemma (see: \cite[Definition 1 and Proposition 7]{Szalkai_Dosa}). 
\begin{definition}
    Let for any vector \(v=(v_1,...,v_n) \in \mathbb{R}^n\) we define,
    \[
    \operatorname{supp}(v):=\left\{i \leq m: v_i \neq 0\right\}
    \] and call it the support of \(v\). A solution \(v_0\) of the homogeneous linear equation 
    \(
    Mv=0
    \)
    is called a minimal solution if the set \(\operatorname{supp}(v_0)\) is minimal, that is, there exists no other solution \(v\) of the equation satisfying \[\operatorname{supp}(v)\varsubsetneqq \operatorname{supp}(v_0).\]
\end{definition}
\begin{lemma} Let \(M\) be a matrix then a solution \(v_0\) of the equation \(
    Mv=0,
    \)
    which \(i_1,i_2,...,i_t\) -th components are nonzero and all other components are zero, is minimal, then the column system consisting of \(i_1,i_2,...,i_k\) -th columns of the matrix \(M\) is a minimal linearly dependent column system.
\end{lemma}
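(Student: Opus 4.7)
The plan is to verify separately the two defining properties of a minimal linearly dependent column system: first that the columns $c_{i_1},\dots,c_{i_t}$ of $M$ are linearly dependent, and then that no proper sub-collection of them is.

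For the first step, I would simply unpack the matrix equation $Mv_0=0$ column by column. Since every component of $v_0$ outside the index set $\{i_1,\dots,i_t\}$ vanishes, this equation reduces to $\sum_{j=1}^{t}(v_0)_{i_j}\,c_{i_j}=0$. By the assumption that $i_1,\dots,i_t$ are precisely the indices where $v_0$ is nonzero, every coefficient $(v_0)_{i_j}$ is nonzero, so this is a non-trivial dependence relation and the columns $c_{i_1},\dots,c_{i_t}$ are linearly dependent.

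For the minimality part, I would argue by contrapositive. Suppose some proper subset $\{c_{i_{j_1}},\dots,c_{i_{j_s}}\}$ with $s<t$ were already linearly dependent, witnessed by scalars $\alpha_1,\dots,\alpha_s$, not all zero, satisfying $\sum_{k=1}^{s}\alpha_k c_{i_{j_k}}=0$. I would then construct an auxiliary vector $v\in\mathbb{R}^n$ by setting $v_{i_{j_k}}=\alpha_k$ for $k=1,\dots,s$ and zero in every other coordinate. By construction $Mv=0$, and $v\neq 0$ since at least one $\alpha_k$ is nonzero; moreover $\operatorname{supp}(v)\subseteq\{i_{j_1},\dots,i_{j_s}\}\subsetneq\{i_1,\dots,i_t\}=\operatorname{supp}(v_0)$. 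This strictly smaller support contradicts the hypothesis that $v_0$ is a minimal solution of $Mv=0$, completing the argument.

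The main (and essentially only) obstacle is bookkeeping: making sure the translation between "nonzero coordinates of a null-vector" and "indices of columns appearing in a dependence relation" is handled symmetrically in both directions. There is no hidden technical content, because the lemma is just the standard identification of the kernel of $M$ with the space of linear-dependence relations among its columns, re-expressed in the language of minimal supports.
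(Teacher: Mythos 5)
Your argument is correct. Note, however, that the paper does not prove this lemma at all: it is quoted as a known fact with a pointer to Definition~1 and Proposition~7 of the Szalkai--D\'osa reference, so there is no internal proof to compare against. Your two-step argument is exactly the standard content of that cited result: reading $Mv_0=0$ as the column relation $\sum_j (v_0)_{i_j}c_{i_j}=0$ with all coefficients nonzero gives the dependence, and the contrapositive construction of a kernel vector supported on a proper subset of $\{i_1,\dots,i_t\}$ gives minimality, since such a vector is nonzero and has strictly smaller support than $v_0$, contradicting the definition of a minimal solution. (The only convention to be aware of is that the paper's Definition~3.1 tacitly restricts to nonzero ``other'' solutions, since the zero vector always has empty support; your constructed witness is nonzero, so your contradiction is legitimate.) It is also worth observing that your support-transfer construction is the same mechanism the authors themselves use in the proof of their Lemma~3.2, just run in the opposite direction, so your proof fits naturally into the paper and makes Section~3 self-contained where the original text relies on an external citation.
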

The next lemma connects minimal solutions and minimal projection cycles.
\begin{lemma}
Let \(p'=(x_{i_1},x_{i_2},\dots x_{i_t}) \subset p\) be a minimal projection cycle, and \(\lambda'=(\lambda'_{i_1},\lambda'_{i_2},\dots \lambda'_{i_t})\) be a vector assigned to \(p'\). Then the vector, \(\lambda=(\lambda_1,...,\lambda_k)\) is a minimal solution of the equation \eqref{e: matrix form}, where
\begin{equation} \label{e: vector-g}
    \lambda_i=
    \begin{cases}
        \lambda'_i, \hspace{13pt} \text{ if } i \in \{i_1,i_2,\dots, i_t\} \\
        0, \hspace{17pt}\text{ otherwise.}
    \end{cases}
\end{equation}

\end{lemma}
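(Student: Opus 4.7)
The plan is to prove the claim directly from the definitions, in two stages: first showing that the padded vector $\lambda$ is a solution of $A\lambda = 0$, then establishing the minimality of its support by contradiction against the minimality of the projection cycle $p'$.

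For the first stage, observe that each row of $A$ encodes one projection-plane equation of the form $\sum_{\pi_j(x_i) = \xi} \lambda_i = 0$ for some coordinate index $j$ and some value $\xi \in X_j$ attained by a point of $p$. Because $\lambda_i = 0$ for every $i \notin \{i_1,\dots,i_t\}$, this sum collapses to $\sum_{\pi_j(x_{i_\ell}) = \xi} \lambda'_{i_\ell}$, which vanishes since $(p',\lambda')$ is by hypothesis a projection cycle-vector and therefore satisfies \eqref{e: weak projection} for every pair $(j,\xi)$. Hence $\lambda$ solves \eqref{e: matrix form}.

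For the minimality of the support, I would argue by contradiction. Suppose there exists a nonzero solution $\mu$ of $A\mu = 0$ with $\operatorname{supp}(\mu) \varsubsetneqq \operatorname{supp}(\lambda) = \{i_1,\dots,i_t\}$. Put $p'' := \{x_i : i \in \operatorname{supp}(\mu)\}$, a proper subset of $p'$, and let $\mu'$ denote the restriction of $\mu$ to its support. I would then verify that $(p'',\mu')$ is itself a projection cycle-vector: for every coordinate $j$ and every $\xi \in X_j$, the condition $\sum_{\pi_j(x_i) = \xi,\; i \in \operatorname{supp}(\mu)} \mu_i = 0$ is either one of the rows of $A\mu = 0$ (when $\xi$ is a projection of some point of $p$) or a trivially-satisfied empty sum (otherwise). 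In either case it agrees with the projection-cycle condition \eqref{e: weak projection} required for $p''$. Thus $p''$ would be a projection cycle properly contained in $p'$, contradicting the minimality of $p'$.

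The main conceptual point, and the only place any care is needed, is the translation between rows of $A$, which are indexed only by those pairs $(j,\xi)$ arising from projections of points of $p$, and the projection-cycle equations required for the smaller set $p''$. The observation above, that missing rows correspond to empty sums and so impose no additional constraint, closes this gap; once in place, the identification between nonzero supports of solutions of $A\lambda = 0$ and projection cycles contained in $p$ becomes a bijection, and the equivalence between minimal support and minimal projection cycle follows at once.
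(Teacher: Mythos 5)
Your proposal is correct and follows essentially the same route as the paper: the minimality part is proved by exactly the paper's contradiction argument, extracting from a solution with strictly smaller support a projection cycle-vector properly contained in \(p'\). The only difference is that you additionally spell out the (implicit in the paper) verification that the zero-padded vector \(\lambda\) solves \(A\lambda=0\) and the bookkeeping between rows of \(A\) and the plane conditions \eqref{e: weak projection}, which is a harmless elaboration rather than a different approach.
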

\begin{proof}
    Assume the contrary; suppose that \(\lambda\) is not a minimal solution of \eqref{e: matrix form}. Then, there exists a solution \(\lambda^0\) of \eqref{e: matrix form} such that
    \[
    \operatorname{supp}(\lambda^0)\varsubsetneqq \operatorname{supp}(\lambda).
    \]
    Thus, there exists a nonempty set \(\{j_1,j_2,...,j_s\} \varsubsetneqq \{1,2,...,t\}\) such that the \(i_{j_1},i_{j_2},...,i_{j_s}\)-th components of \(\lambda^0\) are nonzero and all other components are zero. And, this implies that \[p''=(x_{i_{j_1}},x_{i_{j_2}},...,x_{i_{j_s}}) \varsubsetneqq p' \] is a projection cycle with vector \(\lambda''=(\lambda^0_{i_{j_1}},\lambda^0_{i_{j_2}},...,\lambda^0_{i_{j_s}})\), which contradicts the minimality of \(p'\).
\end{proof}
The next theorem gives a necessary condition on the minimality of a projection cycle.

\begin{theorem}
    Let, \(a_i\) denote the \(i\)-th column of the matrix \(A\) and \(p'=(x_{i_1},x_{i_2},\dots x_{i_t}) \subset p\) be a minimal projection cycle, then the set of columns \(\{a_{i_1},a_{i_2},\dots a_{i_t}\}\) is a minimal linearly dependent set of columns.
\end{theorem}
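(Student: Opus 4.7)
The plan is to combine the two preparatory results (Lemma 3.1 and Lemma 3.2 from this section) in a straightforward way, since the theorem is essentially the bridge that translates the combinatorial minimality of a projection cycle into the algebraic minimality of the associated column system.

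First, I would invoke Lemma 3.2. Since $p'=(x_{i_1},\dots,x_{i_t})$ is by hypothesis a minimal projection cycle with associated weight vector $\lambda'=(\lambda'_{i_1},\dots,\lambda'_{i_t})$, the lemma guarantees that the padded vector $\lambda=(\lambda_1,\dots,\lambda_k)$ defined by $\lambda_{i_j}=\lambda'_{i_j}$ for $1\le j\le t$ and $\lambda_i=0$ otherwise is a minimal solution of the linear system $A\lambda=0$. In particular, $\operatorname{supp}(\lambda)=\{i_1,i_2,\dots,i_t\}$, because (by definition of a projection cycle-vector) every $\lambda'_{i_j}$ is nonzero.

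Next, I would apply Lemma 3.1 to this minimal solution $\lambda$. The lemma says that whenever a minimal solution of the homogeneous system $Mv=0$ has exactly the indices $i_1,\dots,i_t$ in its support, the corresponding set of columns $\{a_{i_1},a_{i_2},\dots,a_{i_t}\}$ of $M$ is a minimal linearly dependent system. Applying this to $M=A$ and to the solution produced in the previous step yields precisely the conclusion of the theorem.

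I do not expect a serious obstacle here: the only thing to verify is that the two lemmas fit together with the right bookkeeping of indices, namely that the support of the padded vector really is $\{i_1,\dots,i_t\}$ (which follows from all weights of a projection cycle-vector being nonzero by Definition 1.3) and that the matrix $A$ encoding the projection-cycle system (\ref{e: weak projection}) is exactly the matrix $M$ of Lemma 3.1. Once this identification is made, the theorem reduces to a one-line composition: minimal projection cycle $\Rightarrow$ minimal solution of $A\lambda=0$ (Lemma 3.2) $\Rightarrow$ minimal linearly dependent column system (Lemma 3.1).
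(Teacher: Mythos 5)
Your proposal is correct and follows essentially the same route as the paper: pad the weight vector of the minimal cycle with zeros, apply Lemma 3.2 to see it is a minimal solution of the homogeneous system, and then conclude via Lemma 3.1 that the corresponding columns form a minimal linearly dependent system. The bookkeeping point you flag (that the support is exactly $\{i_1,\dots,i_t\}$ since all weights of a projection cycle-vector are nonzero) is implicitly the same as in the paper's two-line argument.
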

\begin{proof}
    Let \(\lambda'=(\lambda'_{i_1},\lambda'_{i_2},\dots \lambda'_{i_t})\) be a vector assigned to \(p'\) and let the vector \(\lambda\) be defined as in \eqref{e: vector-g}. Then, according to Lemma 3.2, \(\lambda\) is a minimal solution of \eqref{e: matrix form}. Thus, by Lemma 3.1 the proof follows.
\end{proof}

\newpage
\section*{Appendix: Finding all minimal projection cycles}
The relation \eqref{e: main result} implies that the error can be found by calculating a maximum over a finite set of the form \(V=U_x \times U_y \times U_z\). Below, we will search for \([V]^{(a)-(e)}_{min}\)-the set of all minimal projection cycle-vectors on set \(V\) satisfying all the propositions from the Theorem  \ref{t: Theorem 2.2}.
 
We can construct a system of equations with variables being the weights at the points of \(U=\{T_i, 1 \le i \le 8\} \cup\{M_i, 1 \le i \le 8\} \cup \{F_i, 1 \le i \le 6\}\) and equations being equations on the faces $x=0$, $x=1$, $y=0$, $y=1$, $z=0$ and $z=1$ correspondingly, since the weights at all other points in \(V\) can be defined uniquely as described in Remark 2.3. It is easy to verify that this system will be 
\[
Mv=0,
\]
where
\[
M=
\begin{pmatrix}
    1 & 0 & -1 & -1 & 0 & 0 & -1 & -2 & -2 & -1 & 1 & 0 & 0 & 0 & 1 & 1 & -1 & 0 & -1 & 0 & 1 & -2 \\
    0 & 1 & -1 & -1 & -2 & -2 & -1 & 0 & 0 & -1 & 0 & 1 & 1 & 1 & 0 & 0 & 0 & -1 & 0 & -1 & -2 & 1 \\
    1 & 0 & -1 & -1 & -2 & -1 & 0 & 0 & -1 & -2 & 1 & 1 & 1 & 0 & 0 & 0 & -1 & 0 & 1 & -2 & -1 & 0 \\
    0 & 1 & -1 & -1 & 0 & -1 & -2 & -2 & -1 & 0 & 0 & 0 & 0 & 1 & 1 & 1 & 0 & -1 & -2 & 1 & 0 & -1 \\
    1 & 0 & -1 & -1 & -1 & -2 & -2 & -1 & 0 & 0 & 0 & 0 & 1 & 1 & 1 & 0 & 1 & -2 & -1 & 0 & -1 & 0 \\
    0 & 1 & -1 & -1 & -1 & 0 & 0 & -1 & -2 & -2 & 1 & 1 & 0 & 0 & 0 & 1 & -2 & 1 & 0 & -1 & 0 & -1
\end{pmatrix}
\]
\\
and
\(
v=(\lambda(T_1),\lambda(T_8),\lambda(M_1),\lambda(M_2),
\dots,\lambda(M_8),\lambda(T_2),\lambda(T_3),
\dots,\lambda(T_7),\lambda(F_1),\lambda(F_2),
\dots,\lambda(F_6))^\mathrm{T}.
\)

Reducing the matrix \(M\) to the row echelon form by using the fact that the rows \(r_1,\dots,r_6\) of the matrix satisfy the identity
\(
r_1+r_2=r_3+r_4=r_5+r_6,
\)
gives \(M \sim M'\), where \( M'\) is the matrix given below
\\
\[
\begin{pmatrix}
    1 & 0 & -1 & -1 & 0 & 0 & -1 & -2 & -2 & -1 & 1 & 0 & 0 & 0 & 1 & 1 & -1 & 0 & -1 & 0 & 1 & -2 \\
    0 & 1 & -1 & -1 & -2 & -2 & -1 & 0 & 0 & -1 & 0 & 1 & 1 & 1 & 0 & 0 & 0 & -1 & 0 & -1 & -2 & 1 \\
    1 & 0 & -1 & -1 & -2 & -1 & 0 & 0 & -1 & -2 & 1 & 1 & 1 & 0 & 0 & 0 & -1 & 0 & 1 & -2 & -1 & 0 \\
    0 & 1 & -1 & -1 & 0 & -1 & -2 & -2 & -1 & 0 & 0 & 0 & 0 & 1 & 1 & 1 & 0 & -1 & -2 & 1 & 0 & -1 \\
    0 & 0 & 0 &0 &0 & 0 & 0 &0 &0 & 0 & 0 &0 &0 & 0 & 0 &0 &0 & 0 & 0 &0 &0 & 0\\
    0 & 0 & 0 &0 &0 & 0 & 0 &0 &0 & 0 & 0 &0 &0 & 0 & 0 &0 &0 & 0 & 0 &0 &0 & 0 
\end{pmatrix}.\]
\bigskip

By the Lemma 2.5 we know that there exists an optimal projection cycle on \(U_x \times U_y \times U_z\) that has $\lambda(T_1)>0$, $\lambda(T_8)>0$, $\lambda(M_i) \ge 0$, $1 \le i \le 8$, $\lambda(F_i) \ge 0$, $1 \le i \le 6$, $\lambda(T_i) \le 0$, $2 \le i \le 7$ and $\lambda(F_i)\lambda(F_j)=0$ if $i \neq j$ and $j-i$ is even. Thus, it is sufficient to find the maximum over all minimal projection cycles on \(V\) satisfying  \(\lambda(T_1),\lambda(T_8)>0.\) Thus, by the \textbf{Theorem} 3.1, we can find all such projection cycles by finding all minimal solutions of the system \(M'v=0\) with \(\lambda(T_1),\lambda(T_8)>0\) or in other words, finding all minimal linearly dependent column systems containing the first two columns.

It is not hard to verify with the help of a computer that there are \(123\) minimal column systems containing the first two columns and corresponding to a minimal solution satisfying the above conditions. And corresponding projection cycle-vectors are the ones given below. Note that we give only the weights on the corner points and the points with positive weights. The remaining points and their weights can be reconstructed as explained above.

\subsection*{1) Only $T_i$ terms}
\[
\textbf{1.1}: T_1, T_8, T_2, T_5:\quad \lambda(T_1) = 1,\quad \lambda(T_8) = 1,\quad \lambda(T_2) = -1,\quad \lambda(T_5) = -1
\]
\[
\textbf{1.2}: T_1, T_8, T_3, T_6:\quad \lambda(T_1) = 1,\quad \lambda(T_8) = 1,\quad \lambda(T_3) = -1,\quad \lambda(T_6) = -1
\]
\[
\textbf{1.3}: T_1, T_8, T_4, T_7:\quad \lambda(T_1) = 1,\quad \lambda(T_8) = 1,\quad \lambda(T_4) = -1,\quad \lambda(T_7) = -1
\]
\[
\textbf{1.4}: T_1, T_8, T_2, T_4, T_6:\quad \lambda(T_1) = 2,\quad \lambda(T_8) = 1,\quad \lambda(T_2) = -1,\quad \lambda(T_4) = -1,\quad \lambda(T_6) = -1
\]
\[
\textbf{1.5}: T_1, T_8, T_3, T_5, T_7:\quad \lambda(T_1) = 1,\quad \lambda(T_8) = 2,\quad \lambda(T_3) = -1,\quad \lambda(T_5) = -1,\quad \lambda(T_7) = -1
\]
\subsection*{2) $T_i$ and one $M_j$}
\[
\textbf{2.1}: T_1, T_8, M_1:\quad \lambda(T_1) = 1,\quad \lambda(T_8) = 1,\quad \lambda(M_1) = 1
\]
\[
\textbf{2.2}: T_1, T_8, M_2:\quad \lambda(T_1) = 1,\quad \lambda(T_8) = 1,\quad \lambda(M_2) = 1
\]
\[
\textbf{2.3}: T_1, T_8, M_3, T_2, T_6:\quad \lambda(T_1) = 3,\quad \lambda(T_8) = 2,\quad \lambda(M_3) = 1,\quad \lambda(T_2) = -1,\quad \lambda(T_6) = -2
\]
\[
\textbf{2.4}: T_1, T_8, M_3, T_5, T_7:\quad \lambda(T_1) = 2,\quad \lambda(T_8) = 3,\quad \lambda(M_3) = 1,\quad \lambda(T_5) = -1,\quad \lambda(T_7) = -2
\]
\[
\textbf{2.5}: T_1, T_8, M_3, T_6, T_7:\quad \lambda(T_1) = 2,\quad \lambda(T_8) = 2,\quad \lambda(M_3) = 1,\quad \lambda(T_6) = -1,\quad \lambda(T_7) = -1
\]
\[
\textbf{2.6}: T_1, T_8, M_4, T_2, T_6:\quad \lambda(T_1) = 3,\quad \lambda(T_8) = 2,\quad \lambda(M_4) = 1,\quad \lambda(T_2) = -2,\quad \lambda(T_6) = -1
\]
\[
\textbf{2.7}: T_1, T_8, M_4, T_2, T_7:\quad \lambda(T_1) = 2,\quad \lambda(T_8) = 2,\quad \lambda(M_4) = 1,\quad \lambda(T_2) = -1,\quad \lambda(T_7) = -1
\]
\[
\textbf{2.8}: T_1, T_8, M_4, T_3, T_7:\quad \lambda(T_1) = 2,\quad \lambda(T_8) = 3,\quad \lambda(M_4) = 1,\quad \lambda(T_3) = -1,\quad \lambda(T_7) = -2
\]
\[
\textbf{2.9}: T_1, T_8, M_5, T_2, T_3:\quad \lambda(T_1) = 2,\quad \lambda(T_8) = 2,\quad \lambda(M_5) = 1,\quad \lambda(T_2) = -1,\quad \lambda(T_3) = -1
\]
\[
\textbf{2.10}: T_1, T_8, M_5, T_2, T_4:\quad \lambda(T_1) = 3,\quad \lambda(T_8) = 2,\quad \lambda(M_5) = 1,\quad \lambda(T_2) = -2,\quad \lambda(T_4) = -1
\]
\[
\textbf{2.11}: T_1, T_8, M_5, T_3, T_7:\quad \lambda(T_1) = 2,\quad \lambda(T_8) = 3,\quad \lambda(M_5) = 1,\quad \lambda(T_3) = -2,\quad \lambda(T_7) = -1
\]
\[
\textbf{2.12}: T_1, T_8, M_6, T_2, T_4:\quad \lambda(T_1) = 3,\quad \lambda(T_8) = 2,\quad \lambda(M_6) = 1,\quad \lambda(T_2) = -1,\quad \lambda(T_4) = -2
\]
\[
\textbf{2.13}: T_1, T_8, M_6, T_3, T_4:\quad \lambda(T_1) = 2,\quad \lambda(T_8) = 2,\quad \lambda(M_6) = 1,\quad \lambda(T_3) = -1,\quad \lambda(T_4) = -1
\]
\[
\textbf{2.14}: T_1, T_8, M_6, T_3, T_5:\quad \lambda(T_1) = 2,\quad \lambda(T_8) = 3,\quad \lambda(M_6) = 1,\quad \lambda(T_3) = -2,\quad \lambda(T_5) = -1
\]
\[
\textbf{2.15}: T_1, T_8, M_7, T_3, T_5:\quad \lambda(T_1) = 2,\quad \lambda(T_8) = 3,\quad \lambda(M_7) = 1,\quad \lambda(T_3) = -1,\quad \lambda(T_5) = -2
\]
\[
\textbf{2.16}: T_1, T_8, M_7, T_4, T_5:\quad \lambda(T_1) = 2,\quad \lambda(T_8) = 2,\quad \lambda(M_7) = 1,\quad \lambda(T_4) = -1,\quad \lambda(T_5) = -1
\]
\[
\textbf{2.17}: T_1, T_8, M_7, T_4, T_6:\quad \lambda(T_1) = 3,\quad \lambda(T_8) = 2,\quad \lambda(M_7) = 1,\quad \lambda(T_4) = -2,\quad \lambda(T_6) = -1
\]
\[
\textbf{2.18}: T_1, T_8, M_8, T_4, T_6:\quad \lambda(T_1) = 3,\quad \lambda(T_8) = 2,\quad \lambda(M_8) = 1,\quad \lambda(T_4) = -1,\quad \lambda(T_6) = -2
\]
\[
\textbf{2.19}: T_1, T_8, M_8, T_5, T_6:\quad \lambda(T_1) = 2,\quad \lambda(T_8) = 2,\quad \lambda(M_8) = 1,\quad \lambda(T_5) = -1,\quad \lambda(T_6) = -1
\]
\[
\textbf{2.20}: T_1, T_8, M_8, T_5, T_7:\quad \lambda(T_1) = 2,\quad \lambda(T_8) = 3,\quad \lambda(M_8) = 1,\quad \lambda(T_5) = -2,\quad \lambda(T_7) = -1
\]
\subsection*{3) $T_i$ and two $M_j$}
\[
\textbf{3.1}: T_1, T_8, M_3, M_6:\quad \lambda(T_1) = 2,\quad \lambda(T_8) = 2,\quad \lambda(M_3) = 1,\quad \lambda(M_6) = 1
\]
\[
\textbf{3.2}: T_1, T_8, M_4, M_7:\quad \lambda(T_1) = 2,\quad \lambda(T_8) = 2,\quad \lambda(M_4) = 1,\quad \lambda(M_7) = 1
\]
\[
\textbf{3.3}: T_1, T_8, M_5, M_8:\quad \lambda(T_1) = 2,\quad \lambda(T_8) = 2,\quad \lambda(M_5) = 1,\quad \lambda(M_8) = 1
\]
\[
\textbf{3.4}: T_1, T_8, M_3, M_4, T_7:\quad \lambda(T_1) = 3,\quad \lambda(T_8) = 4,\quad \lambda(M_3) = 1,\quad \lambda(M_4) = 1,\quad \lambda(T_7) = -3
\]
\[
\textbf{3.5}: T_1, T_8, M_3, M_5, T_2:\quad \lambda(T_1) = 5,\quad \lambda(T_8) = 4,\quad \lambda(M_3) = 1,\quad \lambda(M_5) = 2,\quad \lambda(T_2) = -3
\]
\[
\textbf{3.6}: T_1, T_8, M_3, M_5, T_7:\quad \lambda(T_1) = 4,\quad \lambda(T_8) = 5,\quad \lambda(M_3) = 2,\quad \lambda(M_5) = 1,\quad \lambda(T_7) = -3
\]
\[
\textbf{3.7}: T_1, T_8, M_3, M_7, T_5:\quad \lambda(T_1) = 4,\quad \lambda(T_8) = 5,\quad \lambda(M_3) = 1,\quad \lambda(M_7) = 2,\quad \lambda(T_5) = -3
\]
\[
\textbf{3.8}: T_1, T_8, M_3, M_7, T_6:\quad \lambda(T_1) = 5,\quad \lambda(T_8) = 4,\quad \lambda(M_3) = 2,\quad \lambda(M_7) = 1,\quad \lambda(T_6) = -3
\]
\[
\textbf{3.9}: T_1, T_8, M_3, M_8, T_6:\quad \lambda(T_1) = 4,\quad \lambda(T_8) = 3,\quad \lambda(M_3) = 1,\quad \lambda(M_8) = 1,\quad \lambda(T_6) = -3
\]
\[
\textbf{3.10}: T_1, T_8, M_4, M_5, T_2:\quad \lambda(T_1) = 4,\quad \lambda(T_8) = 3,\quad \lambda(M_4) = 1,\quad \lambda(M_5) = 1,\quad \lambda(T_2) = -3
\]
\[
\textbf{3.11}: T_1, T_8, M_4, M_6, T_2:\quad \lambda(T_1) = 5,\quad \lambda(T_8) = 4,\quad \lambda(M_4) = 2,\quad \lambda(M_6) = 1,\quad \lambda(T_2) = -3
\]
\[
\textbf{3.12}: T_1, T_8, M_4, M_6, T_3:\quad \lambda(T_1) = 4,\quad \lambda(T_8) = 5,\quad \lambda(M_4) = 1,\quad \lambda(M_6) = 2,\quad \lambda(T_3) = -3
\]
\[
\textbf{3.13}: T_1, T_8, M_4, M_8, T_6:\quad \lambda(T_1) = 5,\quad \lambda(T_8) = 4,\quad \lambda(M_4) = 1,\quad \lambda(M_8) = 2,\quad \lambda(T_6) = -3
\]
\[
\textbf{3.14}: T_1, T_8, M_4, M_8, T_7:\quad \lambda(T_1) = 4,\quad \lambda(T_8) = 5,\quad \lambda(M_4) = 2,\quad \lambda(M_8) = 1,\quad \lambda(T_7) = -3
\]
\[
\textbf{3.15}: T_1, T_8, M_5, M_6, T_3:\quad \lambda(T_1) = 3,\quad \lambda(T_8) = 4,\quad \lambda(M_5) = 1,\quad \lambda(M_6) = 1,\quad \lambda(T_3) = -3
\]
\[
\textbf{3.16}: T_1, T_8, M_5, M_7, T_3:\quad \lambda(T_1) = 4,\quad \lambda(T_8) = 5,\quad \lambda(M_5) = 2,\quad \lambda(M_7) = 1,\quad \lambda(T_3) = -3
\]
\[
\textbf{3.17}: T_1, T_8, M_5, M_7, T_4:\quad \lambda(T_1) = 5,\quad \lambda(T_8) = 4,\quad \lambda(M_5) = 1,\quad \lambda(M_7) = 2,\quad \lambda(T_4) = -3
\]
\[
\textbf{3.18}: T_1, T_8, M_6, M_7, T_4:\quad \lambda(T_1) = 4,\quad \lambda(T_8) = 3,\quad \lambda(M_6) = 1,\quad \lambda(M_7) = 1,\quad \lambda(T_4) = -3
\]
\[
\textbf{3.19}: T_1, T_8, M_6, M_8, T_4:\quad \lambda(T_1) = 5,\quad \lambda(T_8) = 4,\quad \lambda(M_6) = 2,\quad \lambda(M_8) = 1,\quad \lambda(T_4) = -3
\]
\[
\textbf{3.20}: T_1, T_8, M_6, M_8, T_5:\quad \lambda(T_1) = 4,\quad \lambda(T_8) = 5,\quad \lambda(M_6) = 1,\quad \lambda(M_8) = 2,\quad \lambda(T_5) = -3
\]
\[
\textbf{3.21}: T_1, T_8, M_7, M_8, T_5:\quad \lambda(T_1) = 3,\quad \lambda(T_8) = 4,\quad \lambda(M_7) = 1,\quad \lambda(M_8) = 1,\quad \lambda(T_5) = -3
\]
\subsection*{4) $T_i$ and three $M_j$}
\[
\textbf{4.1}: T_1, T_8, M_3, M_5, M_7:\quad \lambda(T_1) = 3,\quad \lambda(T_8) = 3,\quad \lambda(M_3) = 1,\quad \lambda(M_5) = 1,\quad \lambda(M_7) = 1
\]
\[
\textbf{4.2}: T_1, T_8, M_4, M_6, M_8:\quad \lambda(T_1) = 3,\quad \lambda(T_8) = 3,\quad \lambda(M_4) = 1,\quad \lambda(M_6) = 1,\quad \lambda(M_8) = 1
\]
\subsection*{5) $T_i$ and one $F_k$}
\[
\textbf{5.1}: T_1, T_8, T_2, F_2:\quad \lambda(T_1) = 2,\quad \lambda(T_8) = 1,\quad \lambda(T_2) = -2,\quad \lambda(F_2) = 1
\]
\[
\textbf{5.2}: T_1, T_8, T_3, F_3:\quad \lambda(T_1) = 1,\quad \lambda(T_8) = 2,\quad \lambda(T_3) = -2,\quad \lambda(F_3) = 1
\]
\[
\textbf{5.3}: T_1, T_8, T_4, F_6:\quad \lambda(T_1) = 2,\quad \lambda(T_8) = 1,\quad \lambda(T_4) = -2,\quad \lambda(F_6) = 1
\]
\[
\textbf{5.4}: T_1, T_8, T_5, F_1:\quad \lambda(T_1) = 1,\quad \lambda(T_8) = 2,\quad \lambda(T_5) = -2,\quad \lambda(F_1) = 1
\]
\[
\textbf{5.5}: T_1, T_8, T_6, F_4:\quad \lambda(T_1) = 2,\quad \lambda(T_8) = 1,\quad \lambda(T_6) = -2,\quad \lambda(F_4) = 1
\]
\[
\textbf{5.6}: T_1, T_8, T_7, F_5:\quad \lambda(T_1) = 1,\quad \lambda(T_8) = 2,\quad \lambda(T_7) = -2,\quad \lambda(F_5) = 1
\]
\[
\textbf{5.7}: T_1, T_8, T_2, T_4, F_3:\quad \lambda(T_1) = 3,\quad \lambda(T_8) = 2,\quad \lambda(T_2) = -2,\quad \lambda(T_4) = -2,\quad \lambda(F_3) = 1
\]
\[
\textbf{5.8}: T_1, T_8, T_2, T_6, F_5:\quad \lambda(T_1) = 3,\quad \lambda(T_8) = 2,\quad \lambda(T_2) = -2,\quad \lambda(T_6) = -2,\quad \lambda(F_5) = 1
\]
\[
\textbf{5.9}: T_1, T_8, T_3, T_5, F_6:\quad \lambda(T_1) = 2,\quad \lambda(T_8) = 3,\quad \lambda(T_3) = -2,\quad \lambda(T_5) = -2,\quad \lambda(F_6) = 1
\]
\[
\textbf{5.10}: T_1, T_8, T_3, T_7, F_2:\quad \lambda(T_1) = 2,\quad \lambda(T_8) = 3,\quad \lambda(T_3) = -2,\quad \lambda(T_7) = -2,\quad \lambda(F_2) = 1
\]
\[
\textbf{5.11}: T_1, T_8, T_4, T_6, F_1:\quad \lambda(T_1) = 3,\quad \lambda(T_8) = 2,\quad \lambda(T_4) = -2,\quad \lambda(T_6) = -2,\quad \lambda(F_1) = 1
\]
\[
\textbf{5.12}: T_1, T_8, T_5, T_7, F_4:\quad \lambda(T_1) = 2,\quad \lambda(T_8) = 3,\quad \lambda(T_5) = -2,\quad \lambda(T_7) = -2,\quad \lambda(F_4) = 1
\]
\subsection*{6) $T_i$, one $M_j$, one $F_k$}
\[
\textbf{6.1}: T_1, T_8, M_3, T_2, F_3:\quad \lambda(T_1) = 2,\quad \lambda(T_8) = 2,\quad \lambda(M_3) = 1,\quad \lambda(T_2) = -1,\quad \lambda(F_3) = 1
\]
\[
\textbf{6.2}: T_1, T_8, M_3, T_5, F_6:\quad \lambda(T_1) = 2,\quad \lambda(T_8) = 2,\quad \lambda(M_3) = 1,\quad \lambda(T_5) = -1,\quad \lambda(F_6) = 1
\]
\[
\textbf{6.3}: T_1, T_8, M_3, T_6, F_1:\quad \lambda(T_1) = 5,\quad \lambda(T_8) = 4,\quad \lambda(M_3) = 2,\quad \lambda(T_6) = -4,\quad \lambda(F_1) = 1
\]
\[
\textbf{6.4}: T_1, T_8, M_3, T_6, F_6:\quad \lambda(T_1) = 4,\quad \lambda(T_8) = 3,\quad \lambda(M_3) = 2,\quad \lambda(T_6) = -2,\quad \lambda(F_6) = 1
\]
\[
\textbf{6.5}: T_1, T_8, M_3, T_7, F_2:\quad \lambda(T_1) = 4,\quad \lambda(T_8) = 5,\quad \lambda(M_3) = 2,\quad \lambda(T_7) = -4,\quad \lambda(F_2) = 1
\]
\[
\textbf{6.6}: T_1, T_8, M_3, T_7, F_3:\quad \lambda(T_1) = 3,\quad \lambda(T_8) = 4,\quad \lambda(M_3) = 2,\quad \lambda(T_7) = -2,\quad \lambda(F_3) = 1
\]
\[
\textbf{6.7}: T_1, T_8, M_4, T_2, F_3:\quad \lambda(T_1) = 5,\quad \lambda(T_8) = 4,\quad \lambda(M_4) = 2,\quad \lambda(T_2) = -4,\quad \lambda(F_3) = 1
\]
\[
\textbf{6.8}: T_1, T_8, M_4, T_2, F_6:\quad \lambda(T_1) = 4,\quad \lambda(T_8) = 3,\quad \lambda(M_4) = 2,\quad \lambda(T_2) = -2,\quad \lambda(F_6) = 1
\]
\[
\textbf{6.9}: T_1, T_8, M_4, T_3, F_6:\quad \lambda(T_1) = 2,\quad \lambda(T_8) = 2,\quad \lambda(M_4) = 1,\quad \lambda(T_3) = -1,\quad \lambda(F_6) = 1
\]
\[
\textbf{6.10}: T_1, T_8, M_4, T_6, F_1:\quad \lambda(T_1) = 2,\quad \lambda(T_8) = 2,\quad \lambda(M_4) = 1,\quad \lambda(T_6) = -1,\quad \lambda(F_1) = 1
\]
\[
\textbf{6.11}: T_1, T_8, M_4, T_7, F_1:\quad \lambda(T_1) = 3,\quad \lambda(T_8) = 4,\quad \lambda(M_4) = 2,\quad \lambda(T_7) = -2,\quad \lambda(F_1) = 1
\]
\[
\textbf{6.12}: T_1, T_8, M_4, T_7, F_4:\quad \lambda(T_1) = 4,\quad \lambda(T_8) = 5,\quad \lambda(M_4) = 2,\quad \lambda(T_7) = -4,\quad \lambda(F_4) = 1
\]
\[
\textbf{6.13}: T_1, T_8, M_5, T_2, F_4:\quad \lambda(T_1) = 4,\quad \lambda(T_8) = 3,\quad \lambda(M_5) = 2,\quad \lambda(T_2) = -2,\quad \lambda(F_4) = 1
\]
\[
\textbf{6.14}: T_1, T_8, M_5, T_2, F_5:\quad \lambda(T_1) = 5,\quad \lambda(T_8) = 4,\quad \lambda(M_5) = 2,\quad \lambda(T_2) = -4,\quad \lambda(F_5) = 1
\]
\[
\textbf{6.15}: T_1, T_8, M_5, T_3, F_1:\quad \lambda(T_1) = 3,\quad \lambda(T_8) = 4,\quad \lambda(M_5) = 2,\quad \lambda(T_3) = -2,\quad \lambda(F_1) = 1
\]
\[
\textbf{6.16}: T_1, T_8, M_5, T_3, F_6:\quad \lambda(T_1) = 4,\quad \lambda(T_8) = 5,\quad \lambda(M_5) = 2,\quad \lambda(T_3) = -4,\quad \lambda(F_6) = 1
\]
\[
\textbf{6.17}: T_1, T_8, M_5, T_4, F_1:\quad \lambda(T_1) = 2,\quad \lambda(T_8) = 2,\quad \lambda(M_5) = 1,\quad \lambda(T_4) = -1,\quad \lambda(F_1) = 1
\]
\[
\textbf{6.18}: T_1, T_8, M_5, T_7, F_4:\quad \lambda(T_1) = 2,\quad \lambda(T_8) = 2,\quad \lambda(M_5) = 1,\quad \lambda(T_7) = -1,\quad \lambda(F_4) = 1
\]
\[
\textbf{6.19}: T_1, T_8, M_6, T_2, F_5:\quad \lambda(T_1) = 2,\quad \lambda(T_8) = 2,\quad \lambda(M_6) = 1,\quad \lambda(T_2) = -1,\quad \lambda(F_5) = 1
\]
\[
\textbf{6.20}: T_1, T_8, M_6, T_3, F_2:\quad \lambda(T_1) = 4,\quad \lambda(T_8) = 5,\quad \lambda(M_6) = 2,\quad \lambda(T_3) = -4,\quad \lambda(F_2) = 1
\]
\[
\textbf{6.21}: T_1, T_8, M_6, T_3, F_5:\quad \lambda(T_1) = 3,\quad \lambda(T_8) = 4,\quad \lambda(M_6) = 2,\quad \lambda(T_3) = -2,\quad \lambda(F_5) = 1
\]
\[
\textbf{6.22}: T_1, T_8, M_6, T_4, F_1:\quad \lambda(T_1) = 5,\quad \lambda(T_8) = 4,\quad \lambda(M_6) = 2,\quad \lambda(T_4) = -4,\quad \lambda(F_1) = 1
\]
\[
\textbf{6.23}: T_1, T_8, M_6, T_4, F_4:\quad \lambda(T_1) = 4,\quad \lambda(T_8) = 3,\quad \lambda(M_6) = 2,\quad \lambda(T_4) = -2,\quad \lambda(F_4) = 1
\]
\[
\textbf{6.24}: T_1, T_8, M_6, T_5, F_4:\quad \lambda(T_1) = 2,\quad \lambda(T_8) = 2,\quad \lambda(M_6) = 1,\quad \lambda(T_5) = -1,\quad \lambda(F_4) = 1
\]
\[
\textbf{6.25}: T_1, T_8, M_7, T_3, F_2:\quad \lambda(T_1) = 2,\quad \lambda(T_8) = 2,\quad \lambda(M_7) = 1,\quad \lambda(T_3) = -1,\quad \lambda(F_2) = 1
\]
\[
\textbf{6.26}: T_1, T_8, M_7, T_4, F_2:\quad \lambda(T_1) = 4,\quad \lambda(T_8) = 3,\quad \lambda(M_7) = 2,\quad \lambda(T_4) = -2,\quad \lambda(F_2) = 1
\]
\[
\textbf{6.27}: T_1, T_8, M_7, T_4, F_3:\quad \lambda(T_1) = 5,\quad \lambda(T_8) = 4,\quad \lambda(M_7) = 2,\quad \lambda(T_4) = -4,\quad \lambda(F_3) = 1
\]
\[
\textbf{6.28}: T_1, T_8, M_7, T_5, F_4:\quad \lambda(T_1) = 4,\quad \lambda(T_8) = 5,\quad \lambda(M_7) = 2,\quad \lambda(T_5) = -4,\quad \lambda(F_4) = 1
\]
\[
\textbf{6.29}: T_1, T_8, M_7, T_5, F_5:\quad \lambda(T_1) = 3,\quad \lambda(T_8) = 4,\quad \lambda(M_7) = 2,\quad \lambda(T_5) = -2,\quad \lambda(F_5) = 1
\]
\[
\textbf{6.30}: T_1, T_8, M_7, T_6, F_5:\quad \lambda(T_1) = 2,\quad \lambda(T_8) = 2,\quad \lambda(M_7) = 1,\quad \lambda(T_6) = -1,\quad \lambda(F_5) = 1
\]
\[
\textbf{6.31}: T_1, T_8, M_8, T_4, F_3:\quad \lambda(T_1) = 2,\quad \lambda(T_8) = 2,\quad \lambda(M_8) = 1,\quad \lambda(T_4) = -1,\quad \lambda(F_3) = 1
\]
\[
\textbf{6.32}: T_1, T_8, M_8, T_5, F_3:\quad \lambda(T_1) = 3,\quad \lambda(T_8) = 4,\quad \lambda(M_8) = 2,\quad \lambda(T_5) = -2,\quad \lambda(F_3) = 1
\]
\[
\textbf{6.33}: T_1, T_8, M_8, T_5, F_6:\quad \lambda(T_1) = 4,\quad \lambda(T_8) = 5,\quad \lambda(M_8) = 2,\quad \lambda(T_5) = -4,\quad \lambda(F_6) = 1
\]
\[
\textbf{6.34}: T_1, T_8, M_8, T_6, F_2:\quad \lambda(T_1) = 4,\quad \lambda(T_8) = 3,\quad \lambda(M_8) = 2,\quad \lambda(T_6) = -2,\quad \lambda(F_2) = 1
\]
\[
\textbf{6.35}: T_1, T_8, M_8, T_6, F_5:\quad \lambda(T_1) = 5,\quad \lambda(T_8) = 4,\quad \lambda(M_8) = 2,\quad \lambda(T_6) = -4,\quad \lambda(F_5) = 1
\]
\[
\textbf{6.36}: T_1, T_8, M_8, T_7, F_2:\quad \lambda(T_1) = 2,\quad \lambda(T_8) = 2,\quad \lambda(M_8) = 1,\quad \lambda(T_7) = -1,\quad \lambda(F_2) = 1
\]
\subsection*{7) $T_i$, two $M_j$, one $F_k$}
\[
\textbf{7.1}: T_1, T_8, M_3, M_4, F_6:\quad \lambda(T_1) = 6,\quad \lambda(T_8) = 5,\quad \lambda(M_3) = 2,\quad \lambda(M_4) = 2,\quad \lambda(F_6) = 3
\]
\[
\textbf{7.2}: T_1, T_8, M_3, M_5, F_1:\quad \lambda(T_1) = 7,\quad \lambda(T_8) = 8,\quad \lambda(M_3) = 2,\quad \lambda(M_5) = 4,\quad \lambda(F_1) = 3
\]
\[
\textbf{7.3}: T_1, T_8, M_3, M_5, F_6:\quad \lambda(T_1) = 8,\quad \lambda(T_8) = 7,\quad \lambda(M_3) = 4,\quad \lambda(M_5) = 2,\quad \lambda(F_6) = 3
\]
\[
\textbf{7.4}: T_1, T_8, M_3, M_7, F_2:\quad \lambda(T_1) = 8,\quad \lambda(T_8) = 7,\quad \lambda(M_3) = 2,\quad \lambda(M_7) = 4,\quad \lambda(F_2) = 3
\]
\[
\textbf{7.5}: T_1, T_8, M_3, M_7, F_3:\quad \lambda(T_1) = 7,\quad \lambda(T_8) = 8,\quad \lambda(M_3) = 4,\quad \lambda(M_7) = 2,\quad \lambda(F_3) = 3
\]
\[
\textbf{7.6}: T_1, T_8, M_3, M_8, F_3:\quad \lambda(T_1) = 5,\quad \lambda(T_8) = 6,\quad \lambda(M_3) = 2,\quad \lambda(M_8) = 2,\quad \lambda(F_3) = 3
\]
\[
\textbf{7.7}: T_1, T_8, M_4, M_5, F_1:\quad \lambda(T_1) = 5,\quad \lambda(T_8) = 6,\quad \lambda(M_4) = 2,\quad \lambda(M_5) = 2,\quad \lambda(F_1) = 3
\]
\[
\textbf{7.8}: T_1, T_8, M_4, M_6, F_1:\quad \lambda(T_1) = 7,\quad \lambda(T_8) = 8,\quad \lambda(M_4) = 4,\quad \lambda(M_6) = 2,\quad \lambda(F_1) = 3
\]
\[
\textbf{7.9}: T_1, T_8, M_4, M_6, F_4:\quad \lambda(T_1) = 8,\quad \lambda(T_8) = 7,\quad \lambda(M_4) = 2,\quad \lambda(M_6) = 4,\quad \lambda(F_4) = 3
\]
\[
\textbf{7.10}: T_1, T_8, M_4, M_8, F_3:\quad \lambda(T_1) = 7,\quad \lambda(T_8) = 8,\quad \lambda(M_4) = 2,\quad \lambda(M_8) = 4,\quad \lambda(F_3) = 3
\]
\[
\textbf{7.11}: T_1, T_8, M_4, M_8, F_6:\quad \lambda(T_1) = 8,\quad \lambda(T_8) = 7,\quad \lambda(M_4) = 4,\quad \lambda(M_8) = 2,\quad \lambda(F_6) = 3
\]
\[
\textbf{7.12}: T_1, T_8, M_5, M_6, F_4:\quad \lambda(T_1) = 6,\quad \lambda(T_8) = 5,\quad \lambda(M_5) = 2,\quad \lambda(M_6) = 2,\quad \lambda(F_4) = 3
\]
\[
\textbf{7.13}: T_1, T_8, M_5, M_7, F_4:\quad \lambda(T_1) = 8,\quad \lambda(T_8) = 7,\quad \lambda(M_5) = 4,\quad \lambda(M_7) = 2,\quad \lambda(F_4) = 3
\]
\[
\textbf{7.14}: T_1, T_8, M_5, M_7, F_5:\quad \lambda(T_1) = 7,\quad \lambda(T_8) = 8,\quad \lambda(M_5) = 2,\quad \lambda(M_7) = 4,\quad \lambda(F_5) = 3
\]
\[
\textbf{7.15}: T_1, T_8, M_6, M_7, F_5:\quad \lambda(T_1) = 5,\quad \lambda(T_8) = 6,\quad \lambda(M_6) = 2,\quad \lambda(M_7) = 2,\quad \lambda(F_5) = 3
\]
\[
\textbf{7.16}: T_1, T_8, M_6, M_8, F_2:\quad \lambda(T_1) = 8,\quad \lambda(T_8) = 7,\quad \lambda(M_6) = 2,\quad \lambda(M_8) = 4,\quad \lambda(F_2) = 3
\]
\[
\textbf{7.17}: T_1, T_8, M_6, M_8, F_5:\quad \lambda(T_1) = 7,\quad \lambda(T_8) = 8,\quad \lambda(M_6) = 4,\quad \lambda(M_8) = 2,\quad \lambda(F_5) = 3
\]
\[
\textbf{7.18}: T_1, T_8, M_7, M_8, F_2:\quad \lambda(T_1) = 6,\quad \lambda(T_8) = 5,\quad \lambda(M_7) = 2,\quad \lambda(M_8) = 2,\quad \lambda(F_2) = 3
\]
\subsection*{8) $T_i$ and two $F_k$}
\[
\textbf{8.1}: T_1, T_8, F_1, F_2:\quad \lambda(T_1) = 1,\quad \lambda(T_8) = 1,\quad \lambda(F_1) = 1,\quad \lambda(F_2) = 1
\]
\[
\textbf{8.2}: T_1, T_8, F_3, F_4:\quad \lambda(T_1) = 1,\quad \lambda(T_8) = 1,\quad \lambda(F_3) = 1,\quad \lambda(F_4) = 1
\]
\[
\textbf{8.3}: T_1, T_8, F_5, F_6:\quad \lambda(T_1) = 1,\quad \lambda(T_8) = 1,\quad \lambda(F_5) = 1,\quad \lambda(F_6) = 1
\]

\subsection*{9) $T_i$, one $M_j$, two $F_k$}

\[
\textbf{9.1}: T_1, T_8, M_3, F_3, F_6:\quad \lambda(T_1) = 3,\quad \lambda(T_8) = 3,\quad \lambda(M_3) = 2,\quad \lambda(F_3) = 1,\quad \lambda(F_6) = 1
\]
\[
\textbf{9.2}: T_1, T_8, M_4, F_1, F_6:\quad \lambda(T_1) = 3,\quad \lambda(T_8) = 3,\quad \lambda(M_4) = 2,\quad \lambda(F_1) = 1,\quad \lambda(F_6) = 1
\]
\[
\textbf{9.3}: T_1, T_8, M_5, F_1, F_4:\quad \lambda(T_1) = 3,\quad \lambda(T_8) = 3,\quad \lambda(M_5) = 2,\quad \lambda(F_1) = 1,\quad \lambda(F_4) = 1
\]
\[
\textbf{9.4}: T_1, T_8, M_6, F_4, F_5:\quad \lambda(T_1) = 3,\quad \lambda(T_8) = 3,\quad \lambda(M_6) = 2,\quad \lambda(F_4) = 1,\quad \lambda(F_5) = 1
\]
\[
\textbf{9.5}: T_1, T_8, M_7, F_2, F_5:\quad \lambda(T_1) = 3,\quad \lambda(T_8) = 3,\quad \lambda(M_7) = 2,\quad \lambda(F_2) = 1,\quad \lambda(F_5) = 1
\]
\[
\textbf{9.6}: T_1, T_8, M_8, F_2, F_3:\quad \lambda(T_1) = 3,\quad \lambda(T_8) = 3,\quad \lambda(M_8) = 2,\quad \lambda(F_2) = 1,\quad \lambda(F_3) = 1
\]
\begin{remark}
    For the above minimal projection cycles $(p,\lambda)$, there exists a function $f$, satisfying conditions (2.1), such that $E(f; \Omega)=\frac{\sum \lambda_i f(x_i)}{\sum \lambda_i}$. For example, \\ for the minimal projection cycle \\    
    1.1. $T_1, T_8, T_2, T_5: \ \lambda(T_1) = 1, \ \lambda(T_8) = 1, \ \lambda(T_2) = -1, \ \lambda(T_5) = -1$ \\
this is the function $f(x,y,z)=xz$:
    \[ E(f, \Omega)=\frac{1}{4}=\frac{\sum_{i=1,8,2,5}\lambda(T_i)f(T_i)}{ \sum_{i=1,8,2,5}|\lambda(T_i)|};\]
for the minimal projection cycle \\    
    1.5. $T_1, T_8, T_3, T_5, T_7:\quad \lambda(T_1) = 1,\quad \lambda(T_8) = 2,\quad \lambda(T_3) = -1,\quad \lambda(T_5) = -1,\quad \lambda(T_7) = -1$ \\
this is the function $f(x,y,z)=xyz$:
    \[ E(f, \Omega)=\frac{1}{3}=\frac{\sum_{i=1,8,3,5,7}\lambda(T_i)f(T_i)}{ \sum_{i=1,8,3,5,7}|\lambda(T_i)|};\]
for the minimal projection cycle \\    
    2.11. $T_1, T_8, M_5, T_3, T_7:\quad \lambda(T_1) = 2,\quad \lambda(T_8) = 3,\quad \lambda(M_5) = 1,\quad \lambda(T_3) = -2,\quad \lambda(T_7) = -1$ \\
this is the function $f(x,y,z)= \begin{cases}
        xz, \ \ \ \ \ \ \ \ \ \ \ \ \ \ \ \ \ \ \ \ \ \ \ \ for \ \  0 \le x \le \frac{1}{2}, \ 0 \le z \le \frac{1}{2}, \\
        \frac{z}{2}+\frac{(2x-1)y}{4}, \ \ \ \ \ \ \ \ \ \ \ \ \ for \ \  \frac{1}{2} \le x \le 1, \ 0 \le z \le \frac{1}{2}, \\
        \frac{x}{2}+\frac{(2z-1)y}{4}, \ \ \ \ \ \ \ \ \ \ \ \ \ for \ \  0 \le x \le \frac{1}{2}, \ \frac{1}{2} \le z \le 1, \\
        \frac{1}{4}+\frac{(2x-1)y}{4}+\frac{(2z-1)y}{4}, \ for \ \  \frac{1}{2} \le x \le 1, \ \frac{1}{2} \le z \le 1
    \end{cases}$:
    \[ E(f, \Omega)=\frac{7}{48}=\frac{\sum_{i=1,8,5,7}\lambda(T_i)f(T_i)+\lambda(M_5)f(M_5)+\sum_{i=3,4,5}\lambda(M_{5,l_i})f(M_{5,l_i})}{ \sum_{i=1,8,5,7}|\lambda(T_i)|+|\lambda(M_5)|+\sum_{i=3,4,5}|\lambda(M_{5,l_i})|},\]
where $M_5=(\frac{1}{2},\frac{1}{2},\frac{1}{2})$, $M_{5,l_i}=\Pr_{l_i}M_5$, $i=3,4,5$.
\end{remark}

\end{document}